\title{A Tutte polynomial for toric arrangements}
\author{Luca Moci}
\begin{document}

\newtheorem{te}{Theorem}[section]
\newtheorem{lem}[te]{Lemma}
\newtheorem{pr}[te]{Proposition}
\newtheorem{prob}[te]{Problem}
\newtheorem{co}[te]{Corollary}
\newtheorem{cj}[te]{Conjecture}
\theoremstyle{definition}
\newtheorem{re}[te]{Remark}
\newtheorem{ex}[te]{Example}

%primo lavoro
\newcommand{\faktor}{\frac}
\newcommand{\PT}{\mathcal{C}_{0}(\Phi)}
\newcommand{\Pt}{\mathcal{C}_{0}(\Theta)}
\newcommand{\Tquo}{{\mathfrak{h}}/{\langle\Phi^\vee\rangle}}
\newcommand{\sym}{\mathfrak{S}}

%tutte: nessuna extra

%wonderful
\newcommand{\lc}{\Lambda_C}
\newcommand{\lcb}{\overline{\Lambda_C}}
\newcommand{\cs}{\mathcal{S}}
\newcommand{\s}{\mathcal{S}}
\newcommand{\vs}{\mathcal{V_S}}
\newcommand{\vt}{\mathcal{V_T}}
\newcommand{\vsp}{\vs^p}
\newcommand{\vtq}{\vt^q}
\newcommand{\rx}{\mathcal{R}_X}
\newcommand{\pcn}{\mathbb{P}(\mathbf{N}_T(C))}
\newcommand{\pc}{\mathbb{P}_C}
\newcommand{\zx}{\mathbf{Z}_X}
\newcommand{\yx}{\mathbf{Y}_X}
\newcommand{\zt}{z^{\mathcal{T}}}
\newcommand{\dn}{\mathbf{D}_\mathcal{N}}
\maketitle

\begin{flushright}
    \emph{Ad Alessandro Pucci, \\che ha ripreso in mano il timone della propria vita.}
\end{flushright}

\begin{abstract}
We introduce a multiplicity Tutte polynomial $M(x,y)$, with applications to zonotopes and toric arrangements. We prove that
$M(x,y)$ satisfies a deletion-restriction recursion and has
positive coefficients. The characteristic polynomial and the
Poincar\'{e} polynomial of a toric arrangement are shown to be
specializations of the associated polynomial $M(x,y)$, likewise
the corresponding polynomials for a hyperplane arrangement are
specializations of the ordinary Tutte polynomial. Furthermore,
$M(1,y)$ is the Hilbert series of the related
discrete Dahmen-Micchelli space, while $M(x,1)$ computes the volume
and the number of integer points of the associated zonotope.
\end{abstract}

\section{Introduction}
The Tutte polynomial is an invariant naturally associated to a
matroid and encoding many of its features, such as the number
of bases and their \emph{internal and external activity}
(\cite{Tu}, \cite{Cr}, \cite{li}). If the matroid is realized by a
finite list of vectors, it is natural to consider the
arrangement obtained by taking the hyperplane orthogonal to
each vector. One associates to the poset of the intersections of the
hyperplanes its \emph{characteristic
polynomial}, which provides a rich combinatorial and
topological description of the arrangement (\cite{OSb},
\cite{Za}). This polynomial can be obtained as a specialization
of the Tutte polynomial.

Given a complex torus $T=(\mathbb{C}^*)^n$ and a finite list $X$ of
characters, i.e. elements of $Hom (T,\mathbb{C}^*)$, we
consider the arrangement of hypersurfaces in $T$ obtained by
taking the kernel of each element of $X$. To understand the
geometry of this \emph{toric arrangement} one needs to
describe  the poset $\mathcal{C}(X)$ of the \emph{layers}, i.e.
connected components of the intersections of the
hypersurfaces (\cite{DPt}, \cite{ERS}, \cite{Mo}, \cite{Mw}). Clearly this
poset depends also on the arithmetics of $X$, and not only on
its linear algebra: for example, the kernel of the identity
character $\lambda$ of $\mathbb{C}^*$  is the point $t=1$, but
the kernel of $2\lambda$ has equation $t^2=1$, hence is made of
two points. Therefore we have no chance to get the
characteristic polynomial of $\mathcal{C}(X)$ as a
specialization of the ordinary Tutte polynomial $T_X(x,y)$  of
$X$. In this paper we
define a polynomial $M_X(x,y)$ that specializes to the
characteristic polynomial of $\mathcal{C}(X)$ (Theorem
\ref{Mch}) and to the Poincar\'{e} polynomial of the complement
$\rx$ of the toric arrangement (Theorem \ref{MPo}). In
particular $M_X(1,0)$ is equal to the Euler characteristic of $\rx$,
and also to the number of connected components of the complement
of the arrangement in the \emph{compact torus}
$\overline{T}=(\mathbb{S}^1)^n$.

We call $M_X(x,y)$ the \emph{multiplicity Tutte polynomial} of
$X$, since it coincides with $T_X(x,y)$ when X is unimodular and, in general, it satisfies the same
\emph{deletion-restriction} recursion that holds for $T_X(x,y)$. By
this formula (Theorem \ref{rec}) we prove that $M_X(x,y)$ has
positive coefficients (Theorem \ref{pos}).
We leave open the problem of explaining the meaning of these coefficients (Problem \ref{IntComb}).

A similar polynomial can be defined more generally for
matroids, if we enrich their structure in order to encode some
"arithmetic data"; we call such objects \emph{multiplicity
matroids}. We hope to develop in a future paper an axiomatic theory of these matroids, as well as applications to graph theory, which are only outlined here. The focus of the present paper is in the case we have a list $X$ of vectors
with integer coordinates.

Given such a list, we consider two finite dimensional vector
spaces: a space of polynomials $D(X)$ defined by differential
equations, and a space of quasipolynomials $DM(X)$ defined by
difference equations. These spaces were introduced by Dahmen
and Micchelli to study \emph{box splines} and
\emph{partition functions}, and are deeply related respectively
with the hyperplane arrangement and the toric arrangement
defined by $X$, as explained in the recent book \cite{li}.
In particular, $T_X(1,y)$ is known to be the Hilbert series of
$D(X)$, and we will show that $M_X(1,y)$ is the Hilbert series of
$DM(X)$ (Theorem \ref{dDM}).

On the other hand, by Theorem \ref{inZ} the coefficients of $M_X(x,1)$ count integer
points in some faces of a convex polytope, the \emph{zonotope} %$\mathcal{Z}(X)$
defined by $X$. The relations between zonotopes and Dahmen-Micchelli spaces is being studied intensively over the last years, giving arise to many algebraic and combinatorial constructions (see in particular \cite{li}, \cite{HR}, \cite{DPV1}, \cite{AP}, \cite{HRX}, \cite{Le}). In particular $M_X(1,1)$
is equal to the volume of the zonotope  (Proposition \ref{voZ}), while $M_X(2,1)$ is the number of its integer points (Proposition \ref{M21}).

Finally, we focus on the case in which $X$ is a root system. We will then show some connections with the theory of Weyl groups (see for instance Corollary \ref{wey}).

\paragraph{Acknowledgments.}
I am grateful to my advisor Corrado De Concini for many important suggestions. I also wish to thank Federico Ardila, Michele D'Adderio, Emanuele Delucchi, Matthias Lenz, Mario Marietti, Claudio Procesi, Mimi Tsuruga and Michele Vergne for stimulating discussions and remarks.

\section{Multiplicity matroids and \\multiplicity Tutte polynomials}

\subsection{Definitions}
We start by recalling the notions we will be generalizing.

A \emph{matroid} $\mathfrak{M}$ is a pair $(X, I)$, where $X$
is a finite set and $I$ is a family of subsets of $X$ (called
the \emph{independent sets}) with the following properties:
\begin{enumerate}
  \item The empty set is independent;
  \item every subset of an independent set is independent;
  \item let $A$ and $B$ be two independent sets and assume
      that $A$ has more elements than B. Then there exists
      an element $a\in A\setminus B$ such that $B\cup\{a\}$
      is still independent.
\end{enumerate}

A maximal independent set is called a \emph{basis}. The last
axiom implies that all bases have the same cardinality, which is called
the \emph{rank} of the matroid. Every $A\subseteq X$ has a
natural structure of matroid, defined by considering a subset
of $A$ independent if and only if it is in $I$. Then each
$A\subseteq X$ has a rank which we denote by $r(A)$.

The \emph{Tutte polynomial} of the matroid is then defined as
$$T(x,y)\doteq \sum_{A\subseteq X} (x-1)^{r(X)-r(A)} (y-1)^{|A|-r(A)}.$$

From the definition it is clear that $T(1,1)$ is equal to the number
of bases of the matroid.

The interested reader may refer for instance to \cite{Ox}, \cite{li} 

In the next sections we will recall the two most important examples of matroid
and some properties of their Tutte polynomials.

~

We now introduce the following definitions.

A \emph{multiplicity matroid} $\mathfrak{M}$ is a triple $(X,
I, m)$, where $(X, I)$ is a matroid and $m$ is a function
(called \emph{multiplicity}) from the family of all subsets of
$X$ to the positive integers.
%, having the following properties:

 We say that $m$ is the \emph{trivial multiplicity} if it is identically equal to $1$.

We define the \emph{multiplicity Tutte polynomial} of a
multiplicity matroid as
$$M(x,y)\doteq \sum_{A\subseteq X} m(A) (x-1)^{r(X)-r(A)} (y-1)^{|A|-r(A)}.$$

Let us remark that we can endow every matroid with the trivial
multiplicity, and then $M(x,y)=T(x,y)$.

\begin{re}\label{sum}
Given any two matroids $\mathfrak{M}_1=(X_1, I_1)$ and
$\mathfrak{M}_2=(X_2, I_2)$, we can naturally define a matroid
$\mathfrak{M}_1\oplus \mathfrak{M}_2=(X,I)$, where $X$ is the
disjoint union of $X_1$ and $X_2$, and $A\in I$ if and only if
$A_1\doteq A\cap X_1\in I_1$ and $A_2\doteq A\cap X_2\in I_2$.
Moreover, if $\mathfrak{M}_1$ and $\mathfrak{M}_2$ have
multiplicity functions $m_1$ and $m_2$, $m(A)\doteq
m_1(A_1)\cdot m_2(A_2)$ defines a multiplicity on
$\mathfrak{M}_1\oplus \mathfrak{M}_2$. We notice that the rank
of a subset $A$ is just the sum of the ranks of $A_1$ and
$A_2$, and so it is easily seen that the (multiplicity) Tutte
polynomial of $\mathfrak{M}_1\oplus \mathfrak{M}_2$ is the
product of the (multiplicity) Tutte polynomials of
$\mathfrak{M}_1$ and $\mathfrak{M}_2$.
\end{re}

\subsection{Lists of vectors and zonotopes}

Let $X$ be a finite list of vectors spanning a real vector space $U$, and
$I$ be the family of its linearly independent subsets. Then
$(X, I)$ is a matroid, and the rank of a subset $A$ is just the
dimension of the spanned subspace. We denote by $T_X(x,y)$ the
associated Tutte polynomial.

We associate to the list $X$ a \emph{zonotope}, that is a convex polytope in $U$ defined as follows:
$$\mathcal{Z}(X)\doteq \left\{\sum_{x\in X}t_x x, 0\leq t_x\leq 1\right\}.$$
Zonotopes play an important role in the theory of hyperplane
arrangements, and also in that of \emph{splines}, a class of functions studied in Approximation Theory. (see \cite{li}).

~

 We recall that a \emph{lattice} $\Lambda$ of rank $n$ is a discrete subgroup of $\mathbb{R}^n$ which spans $\mathbb{R}^n$ as a real vector space.
 Every such $\Lambda$ can be generated from some basis of the vector space by forming all linear combinations with integer coefficients;
 hence the group $\Lambda$ is isomorphic to $\mathbb{Z}^n$.
 We will use always the term \emph{lattice} with this meaning, and not in the combinatorial sense (a poset with \emph{join} and \emph{meet}).

Now let $X$ be a finite list of elements in a lattice $\Lambda$,
and let $I$ and $r$ be as above. For every $A\subset X$, we denote by $\langle A\rangle_{\mathbb{Z}}$  and $\langle A\rangle_{\mathbb{R}}$ respectively the sublattice of $\Lambda$ and the subspace of $U\doteq \Lambda\otimes \mathbb{R}$ spanned by $A$. Let us define $$\Lambda_A\doteq \Lambda\cap \langle A\rangle_{\mathbb{R}},$$
the largest sublattice of $\Lambda$ in which $\langle A\rangle_{\mathbb{Z}}$ has finite index. We define $m$ as this index:
$$m(A)\doteq \left[\Lambda_A : \langle A\rangle_{\mathbb{Z}}\right].$$

This defines a multiplicity matroid
and then a multiplicity Tutte polynomial $M_X(x,y)$, which is
the main subject of this paper.

Concretely, if we identify $\Lambda$ with $\mathbb{Z}^n$, we have that for every $A\subset X$ of maximal rank, $m(A)$ is equal to the GCD of the determinants of the bases extracted from $A$.

We say that the list $X$ is \emph{unimodular} if every $B\subset X$ that is a basis for $U$ over $\mathbb{R}$ spans $\Lambda$ over $\mathbb{Z}$ (i.e, in the identification above, $B$ has determinant $\pm 1$). In this case the multiplicity is trivial and $M_X(x,y)=T_X(x,y)$.

We now start by showing
some relations with the zonotope $\mathcal{Z}(X)$
generated by $X$ in $U$.

We already observed that $T_X(1,1)$ is equal to the number of bases that can be
extracted from $X$. On the other hand we have

\begin{pr}\label{voZ}
$M_X(1,1)$ is equal to the volume of the zonotope $\mathcal{Z}(X)$.
\end{pr}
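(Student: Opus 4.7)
The plan is to evaluate $M_X(1,1)$ directly from the definition and recognize the resulting sum as a classical formula for the volume of a zonotope.

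First I would specialize $M_X(x,y)$ at $x=y=1$. In the sum
\[
M_X(x,y) = \sum_{A\subseteq X} m(A)(x-1)^{r(X)-r(A)}(y-1)^{|A|-r(A)},
\]
every term with $r(A)<r(X)$ or $|A|>r(A)$ vanishes, so the only surviving contributions come from subsets $A\subseteq X$ satisfying $r(A)=r(X)$ and $|A|=r(A)$, i.e.\ from bases $B$ of the matroid. Thus
\[
M_X(1,1) = \sum_{B\text{ basis of }X} m(B).
\]
For such a basis $B$, the span $\langle B\rangle_{\mathbb{R}}$ coincides with the full ambient space $U$, hence $\Lambda_B=\Lambda$, and $m(B)=[\Lambda:\langle B\rangle_{\mathbb{Z}}]$. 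After identifying $\Lambda$ with $\mathbb{Z}^n$, this index equals $|\det(B)|$, the volume of the fundamental parallelepiped spanned by $B$ (measured with respect to the volume form normalized so that $\Lambda$ has covolume $1$).

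Next I would invoke the classical decomposition theorem for zonotopes (due to Shephard, and reproved in essentially every source on zonotopes, e.g.\ \cite{li}): the zonotope $\mathcal{Z}(X)$ admits a tiling by translates of the half-open parallelepipeds $\mathcal{Z}(B)$ as $B$ ranges over the bases extracted from $X$. Summing volumes over this tiling gives
\[
\mathrm{vol}(\mathcal{Z}(X)) = \sum_{B\text{ basis of }X} \mathrm{vol}(\mathcal{Z}(B)) = \sum_{B\text{ basis of }X} |\det(B)| = \sum_{B\text{ basis of }X} m(B),
\]
which matches the expression for $M_X(1,1)$ derived above.

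There is no real obstacle here: both steps are essentially bookkeeping, and the only nontrivial ingredient is the Shephard tiling, which I would cite rather than reprove. The one subtle point worth stating explicitly is the normalization of volume: the identity $m(B)=|\det(B)|$ requires that volume be computed with respect to the lattice $\Lambda$, so that the fundamental domain of $\Lambda$ has unit volume; this is the same convention under which the tiling formula is usually stated.
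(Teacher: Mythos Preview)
Your proof is correct and follows essentially the same approach as the paper's own proof: both reduce $M_X(1,1)$ to the sum $\sum_B m(B)$ over bases, identify $m(B)$ with $|\det(B)|$, and then invoke Shephard's paving of $\mathcal{Z}(X)$ by translated parallelepipeds $\mathcal{Z}(B)$ to conclude. The only differences are cosmetic---you spell out more carefully why only bases survive the specialization and make the volume normalization explicit.
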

\begin{proof}
By \cite{Sh}, $\mathcal{Z}(X)$ is paved by a family of
polytopes $\{\Pi_B\}$, where $B$ varies among all the bases
extracted from $X$, and
every $\Pi_B$ is obtained by translating the zonotope $\mathcal{Z}(B)$ generated by the sublist $B$. Hence
$$vol(\Pi_B)=|det(B)|.$$
However, when $B$ is a basis,
\begin{equation}
    m(B)=\left[\Lambda: \langle B\rangle_{\mathbb{Z}}\right]=|det(B)|.
\end{equation}
Since
$$M_X(1,1)=\sum_{B\subset X, B \text{basis}} m(B)$$
the claim follows.
\end{proof}

Further relations between the polynomial $M_X(x,y)$ and the zonotope $\mathcal{Z}(X)$ will be shown in Section \ref{IZ}.

\subsection{Graphs}

Let $G$ be a finite graph, $V(G)$ be the set of its vertices, and $X$ be the set of its edges. For every $A\subseteq X$,  we consider the subgraph of $G$ having set of vertices $V(G)$ and set of edges $A$. By abuse of notation, we denote this subgraph by $A$. We define $I$ as the set of the \emph{forests} in $G$ ( subgraphs whose connected components are simply connected). Then $(X, I)$ is a matroid with rank function
$$r(A)=|V(G)|-c(A),$$
where $c(A)$ is the number of connected components of $A$.

\begin{re}
 If $G$ has neither loops nor multiple edges, let us take a vector space $\widetilde{U}$ with basis $e_1, \dots, e_n$ bijective to $V(G)$, and associate to the edge connecting two vertices $i$ and $j$ the vector $e_i-e_j$.
In this way we get a list $X_G$ of vectors bijective to $X$ and spanning a hyperplane $U$ in $\widetilde{U}$. Since in this bijection the rank is preserved
and forests correspond to linearly independent sets, $G$ and
$X_G$ define the same matroid and have the same Tutte
polynomial.
\end{re}

Now let us assume every edge $e\in X$ to have an integer label $m_e>0$. By defining
$$m(A)\doteq \prod_{e\in A}m_e,$$
we get a multiplicity matroid and then a multiplicity Tutte
polynomial $M_G(x,y)$.

We may view the labels $m_e$ as multiplicities
of the edges in the following way. Let us define a new graph $G_m$ with the same
vertices as $G$, but with $m_e$ edges connecting the two vertices
incident to $e\in X$. Now let $S(G_m)$ be the set of
\emph{simple} subgraphs of $G_m$, i.e subgraphs with at most one
edge connecting any two vertices, and at most one loop on every vertex.
It is then clear that
$$M_G(x,y)\doteq \sum_{A\in S(G_m)} (x-1)^{r(X)-r(A)} (y-1)^{|A|-r(A)}.$$

In particular, $M_G(2,1)$ is equal to the number of forests of $G_m$, and $M_G(1,1)$ the number of \emph{spanning trees}
(i.e., trees connecting all the vertices) of $G_m$.

\section{Deletion-restriction formula and positivity}

The central idea that inspired Tutte in defining the
polynomial $T(x,y)$ was to find the most general invariant satisfying a recursion
 known as \emph{deletion-restriction}.
Such a recursion allows to reduce the computation of the Tutte polynomial to some trivial cases.

In the two examples above (when the matroid is
defined by a list of vectors or by a graph), the polynomial $M(x,y)$ satisfies a
similar recursion. We will explain and prove the algorithm in the first case. The case of graphs
(where the recursion is known as \emph{deletion-contraction}) is described in \cite[Section 2.3.1]{te}.

\subsection{Lists of vectors}

Let $X$ be a finite list of elements spanning a vector space
$U$, and let $\lambda\in\ X$ be a nonzero element. We define two new lists: the list $X_1\doteq X\setminus\{\lambda\}$
of elements of $U$
 and the list $X_2$ of elements of $U/\langle \lambda\rangle$ obtained
by reducing $X_1$ modulo $\lambda$.
Assume that $\lambda$ is dependent in $X$, i.e. $\lambda\in \langle X_1\rangle_\mathbb{R}$.
We have the following well-known formula:
\begin{te}\label{reT}
$$T_X(x,y)=T_{X_1}(x,y)+T_{X_2}(x,y)$$
\end{te}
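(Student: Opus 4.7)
The plan is to prove the identity by partitioning the subsets $A\subseteq X$ appearing in the definition of $T_X(x,y)$ according to whether $\lambda\in A$ or $\lambda\notin A$, and recognizing the two resulting sums as $T_{X_1}$ and $T_{X_2}$ respectively.

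First I would set up the standard rank bookkeeping. Since $\lambda$ is assumed dependent on $X_1$, one has $r(X)=r(X_1)$. Moreover, $\lambda\neq 0$, so the quotient $U\to U/\langle\lambda\rangle$ drops dimension by exactly $1$; combining this with $\lambda\in\langle X_1\rangle_{\mathbb{R}}$ gives $r_{X_2}(X_2)=r(X_1)-1=r(X)-1$. The projection $\pi\colon X_1\to X_2$ is a bijection of lists (preserving multiplicities, even when some image vectors are zero), so for $A'\subseteq X_1$ we have $|\pi(A')|=|A'|$. Finally, the classical contraction formula for matroid rank yields $r_{X_2}(\pi(A'))=r_X(A'\cup\{\lambda\})-r_X(\{\lambda\})=r_X(A'\cup\{\lambda\})-1$.

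Next I would carry out the splitting. For the subsets $A\subseteq X$ with $\lambda\notin A$ we have $A\subseteq X_1$ and, using $r(X)=r(X_1)$ together with the fact that the rank function on subsets of $X_1$ agrees whether computed in $X$ or in $X_1$, the partial sum is literally $T_{X_1}(x,y)$. For the subsets containing $\lambda$, write $A=A'\cup\{\lambda\}$ with $A'\subseteq X_1$; substituting the two identities of the previous paragraph,
\begin{align*}
r(X)-r_X(A)&=(r_{X_2}(X_2)+1)-(r_{X_2}(\pi(A'))+1)=r_{X_2}(X_2)-r_{X_2}(\pi(A')),\\
|A|-r_X(A)&=(|A'|+1)-(r_{X_2}(\pi(A'))+1)=|\pi(A')|-r_{X_2}(\pi(A')).
\end{align*}
Since $\pi$ induces a bijection between sublists of $X_1$ and sublists of $X_2$, reindexing by $A''=\pi(A')$ turns this partial sum into $T_{X_2}(x,y)$, and summing the two pieces gives the claim.

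The main obstacle is the middle identity: one must justify that the rank of $\pi(A')$ in the contracted arrangement $X_2$ really is $r_X(A'\cup\{\lambda\})-1$ and, as a consequence, that $r_{X_2}(X_2)=r(X)-1$. This is where the hypothesis that $\lambda$ is dependent (so $r(X)=r(X_1)$) and the hypothesis that $\lambda$ is nonzero (so the quotient by $\langle\lambda\rangle$ drops rank by exactly one) are both used; everything else is a mechanical rearrangement of exponents.
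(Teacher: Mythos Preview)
The paper does not actually prove this theorem; it introduces it as ``the following well-known formula'' and moves on. Your argument is correct, and it is exactly the standard one: it also matches, line for line, the proof the paper \emph{does} give for the multiplicity analogue (Theorem~\ref{rec}), where the sum is split according to whether $\lambda\in A$ and the two pieces are identified with $M_{X_1}$ and $M_{X_2}$ via the same rank bookkeeping you use.
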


It is now clear why we defined $X$ as a list, and not as a set: even if we start with $X$ composed by distinct (nonzero)  elements,
 some vectors in $X_2$ may appear many times (and some
vectors may be zero).

Notice that by applying the above
formula recursively, our problem reduces to computing $T_Y(x,y)$ when
$Y$ is the union of a list $Y_1$ of $k$ linearly independent vectors and of a list $Y_0$ of $h$ zero vectors ($k,h\geq 0$).
In this case the Tutte polynomial is easily computed.
\begin{lem}
$$T_Y(x,y)=x^k y^h.$$
\end{lem}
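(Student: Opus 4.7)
The plan is to invoke Remark \ref{sum}: the matroid on $Y=Y_1\cup Y_0$ is the direct sum of the matroid on the independent list $Y_1$ and the matroid on the zero list $Y_0$, so
\begin{equation*}
T_Y(x,y)=T_{Y_1}(x,y)\cdot T_{Y_0}(x,y).
\end{equation*}
It will then suffice to check separately that $T_{Y_1}(x,y)=x^k$ and $T_{Y_0}(x,y)=y^h$.

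For the first factor, I would observe that since $Y_1$ consists of $k$ linearly independent vectors, every $A\subseteq Y_1$ is itself independent, so $r(A)=|A|$ and the exponent $|A|-r(A)$ is always zero. Parametrizing by $j=|A|$ and noting there are $\binom{k}{j}$ subsets of that size, the definition of the Tutte polynomial collapses to
\begin{equation*}
T_{Y_1}(x,y)=\sum_{j=0}^{k}\binom{k}{j}(x-1)^{k-j}=\bigl((x-1)+1\bigr)^k=x^k
\end{equation*}
by the binomial theorem.

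For the second factor, the $h$ zero vectors span the trivial subspace, so every subset $A\subseteq Y_0$ has $r(A)=0$ and hence $r(Y_0)-r(A)=0$ while $|A|-r(A)=|A|$. Parametrizing by $i=|A|$,
\begin{equation*}
T_{Y_0}(x,y)=\sum_{i=0}^{h}\binom{h}{i}(y-1)^i=\bigl((y-1)+1\bigr)^h=y^h,
\end{equation*}
again by the binomial theorem. Multiplying the two contributions yields $T_Y(x,y)=x^ky^h$.

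There is no real obstacle here: everything follows from the definition of $T_X(x,y)$ together with the two binomial identities above. If one prefers to avoid Remark \ref{sum}, the same argument works directly by writing an arbitrary $A\subseteq Y$ uniquely as $A_1\sqcup A_0$ with $A_i\subseteq Y_i$, noting that $r(A)=|A_1|$, and splitting the resulting double sum into the product of the two single sums computed above.
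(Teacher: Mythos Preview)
Your proof is correct and follows essentially the same approach as the paper: both invoke Remark~\ref{sum} to factor the Tutte polynomial and compute $T_{Y_0}(x,y)=y^h$ via the binomial theorem. The only cosmetic difference is that the paper peels off the vectors of $Y_1$ one at a time by induction (using $T_{\{\lambda\}}=x$ for a single nonzero vector), whereas you split off all of $Y_1$ at once and evaluate $T_{Y_1}(x,y)=x^k$ directly with the binomial theorem; the underlying idea is the same.
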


\begin{proof}
Given any $\lambda\in Y_1$, since
$$\big\langle Y \big\rangle_{\mathbb{R}} =\big\langle Y\setminus \{v\} \big\rangle_{\mathbb{R}}\oplus \big\langle  \{v\} \big\rangle_{\mathbb{R}}$$
by Remark \ref{sum} we have that
$$T_Y(x,y)=x\: T_{Y\setminus \{\lambda\}}(x,y).$$
Hence by induction we get that $T_Y=x^k\: T_{Y_0}$.
Finally
$$T_{Y_0}(x,y)=\sum_{j=0}^{h}{h \choose j}(y-1)^j=\big((y-1)+1\big)^{h}=y^{h}.$$
\end{proof}

Thus we get:\label{poT}
\begin{te}
$T_X(x,y)$ is a polynomial with positive coefficients.
\end{te}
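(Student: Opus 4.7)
The plan is to induct on $|X|$, using the deletion-restriction recursion (Theorem \ref{reT}) as the inductive step and the preceding lemma as the base case. The two results already provide the complete machinery: either $X$ contains a nonzero element $\lambda$ with $\lambda \in \langle X \setminus \{\lambda\}\rangle_\mathbb{R}$, in which case Theorem \ref{reT} expresses $T_X$ as a sum of Tutte polynomials of the two strictly smaller lists $X_1$ and $X_2$; or else no nonzero $\lambda \in X$ is dependent, which forces the nonzero elements of $X$ to be linearly independent, so $X$ has the form $Y_1 \sqcup Y_0$ (independent vectors plus zero vectors) covered by the lemma.

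In the base case, the lemma gives $T_X(x,y) = x^k y^h$, a monomial with coefficient $1$, which manifestly has positive coefficients. In the inductive step, Theorem \ref{reT} yields $T_X(x,y) = T_{X_1}(x,y) + T_{X_2}(x,y)$ with $|X_1| = |X_2| = |X| - 1$; the inductive hypothesis applies to each summand and positivity is preserved under sums.

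The only point that requires a moment's thought is the exhaustiveness of the dichotomy, namely that the absence of any dependent nonzero $\lambda$ forces the nonzero vectors of $X$ to be linearly independent. This is immediate, since any nontrivial linear relation among the nonzero elements would display one of them as a combination of the others. Given this, I do not anticipate a serious obstacle: once the recursion and the base case from the excerpt are in hand, the induction closes in essentially one line, mirroring the standard positivity argument for the classical Tutte polynomial.
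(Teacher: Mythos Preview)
Your proposal is correct and follows exactly the argument the paper has in mind: the paper presents Theorem \ref{reT} and the lemma $T_Y(x,y)=x^k y^h$, then simply writes ``Thus we get'' before stating the positivity theorem, leaving the induction on $|X|$ implicit. Your write-up makes that induction explicit and correctly handles the dichotomy between the dependent and independent cases.
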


\subsection{Lists of elements in abelian groups.}\label{DRs}

We now want to show a similar recursion for the polynomial
$M_X(x,y)$. Inspired by \cite{DPV2}, we noticed that we need to work in a larger category. Indeed, whereas the quotient of a vector space by a subspace is still a vector space, the quotient of a lattice by a sublattice is not a lattice, but a \emph{finitely generated abelian group}. For example, in the 1-dimensional case, the quotient of $\mathbb{Z}$ by $m\mathbb{Z}$ is the cyclic group of order $m$.

Then let $\Gamma$  be a finitely generated abelian group. For every subset $S$ of $\Gamma$ we denote by $\langle S\rangle$ the generated subgroup. We recall that $\Gamma$ is isomorphic to the direct product of a lattice $\Lambda$ and of a finite group $\Gamma_t$, which is called the \emph{torsion subgroup} of $\Gamma$. We denote by $\pi$ the projection $\pi:\Gamma\rightarrow\Lambda$.

Let $X$ be a finite subset of $\Gamma$. For every $A\subseteq X$ we set
$$\Lambda_A\doteq\Lambda\cap\big\langle\pi(A)\big\rangle_\mathbb{R}$$
and
$$\Gamma_A\doteq\Lambda_A\times \Gamma_t.$$
In other words, $\Gamma_A$ is the largest subgroup of $\Gamma$ in which $\langle A \rangle$ has finite index.

\medskip

Now we define

%\textbf{Definition.}
$$m(A)\doteq \big[ \Gamma_A : \langle A \rangle \big].$$

We also define $r(A)$ as the rank of $\pi(A)$.
In this way we defined a multiplicity matroid, to which we associate its multiplicity Tutte polynomial:
$$M_X(x,y)\doteq \sum_{A\subseteq X} m(A) (x-1)^{r(X)-r(A)} (y-1)^{|A|-r(A)}.$$
It is clear that if $\Gamma$ is a lattice, these definitions coincide with those given in the previous sections.

If on the opposite case, $\Gamma$ is a finite group, then $M(x,y)$ is a
polynomial in which only the variable $y$ appears. Furthermore, this
polynomial evaluated at $y=1$, gives the order of $\Gamma$.
Indeed the only summand that does not vanish is the contribution of
the empty set, which generates the trivial subgroup.

Now let $\lambda\in\ X$ be a nonzero element such that
\begin{equation}
\pi(\lambda)\in\left\langle\pi\big(X\setminus\{\lambda\}\big)\right\rangle_\mathbb{R}
\end{equation}

We set $$X_1\doteq X\setminus\{\lambda\}\subset \Gamma.$$

For every $A\subseteq X$, we denote by $\overline{A}$ its image under the natural projection
$$\Gamma\longrightarrow \Gamma/\langle\lambda\rangle.$$
Since $\Gamma/\langle\lambda\rangle$ is a finitely generated abelian group and $\overline{A}$ is a subset of it, $m(\overline{A})$ is defined.
Notice that
$$m(\overline{A})\doteq \big[ (\Gamma/\langle\lambda\rangle)_{\overline{A}} : \langle \overline{A} \rangle \big]=
 \big[ \Gamma_A/\langle\lambda\rangle : \langle A \rangle/\langle\lambda\rangle \big]=
 \big[ \Gamma_A : \langle A \rangle \big]=m(A).$$
We denote by $X_2$ the subset $\overline{X_1}$ of $\Gamma/\langle\lambda\rangle$. We have the following deletion-restriction formula.
\begin{te}\label{rec}
$$M_X(x,y)=M_{X_1}(x,y)+M_{X_2}(x,y).$$
\end{te}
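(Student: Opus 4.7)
The plan is to mimic the standard deletion--restriction proof for the ordinary Tutte polynomial, splitting the defining sum for $M_X(x,y)$ according to whether or not $\lambda \in A$, and then matching each piece with $M_{X_1}(x,y)$ or $M_{X_2}(x,y)$ term by term.

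First, I would isolate the ``deletion'' piece: the subsets $A\subseteq X$ with $\lambda\notin A$ are exactly the subsets of $X_1$. For such $A$, the multiplicity $m(A)$ and the rank $r(A)$ are intrinsic to $A$ as a subset of $\Gamma$, so they do not depend on whether we view $A$ inside $X$ or inside $X_1$. Moreover, since by hypothesis $\pi(\lambda)\in\langle\pi(X_1)\rangle_{\mathbb{R}}$, the rank is unchanged: $r(X)=r(X_1)$. Consequently each summand agrees with the corresponding summand of $M_{X_1}(x,y)$, and this part of the sum is exactly $M_{X_1}(x,y)$.

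Next, for the ``restriction'' piece, I would set up the bijection $A\leftrightarrow \overline{B}$, where $A = B\cup\{\lambda\}$ with $B\subseteq X_1$ and $\overline{B}\subseteq X_2$ is the image under the projection $\Gamma\to\Gamma/\langle\lambda\rangle$. The multiplicity identity $m(A)=m(\overline{A})=m(\overline{B})$ is already established in the paragraph preceding the theorem (the last equality because $\overline{\lambda}=0$ does not change the generated subgroup nor its saturation). It remains to match the exponents, i.e.\ to prove the two identities
\[
 r(X)-r(A)=r(X_2)-r(\overline{B}),\qquad |A|-r(A)=|\overline{B}|-r(\overline{B}).
\]
Assuming $\pi(\lambda)\neq 0$ (the only case where a nontrivial contraction occurs), I would argue that $r(X_2)=r(X)-1$ because $\pi(\lambda)$ lies in the real span of $\pi(X_1)$ and is nonzero. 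The rank comparison between $r(A)$ and $r(\overline{B})$ splits into two cases: if $\pi(\lambda)\in\langle\pi(B)\rangle_{\mathbb{R}}$ then $r(A)=r(B)$ and $r(\overline{B})=r(B)-1$, while if $\pi(\lambda)\notin\langle\pi(B)\rangle_{\mathbb{R}}$ then $r(A)=r(B)+1$ and $r(\overline{B})=r(B)$. In both cases $r(A)=r(\overline{B})+1$, which yields both desired identities (the second one using $|A|=|B|+1=|\overline{B}|+1$).

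Putting the two halves together gives $M_X(x,y)=M_{X_1}(x,y)+M_{X_2}(x,y)$. The one genuinely delicate step is the rank bookkeeping: one must understand $\Gamma/\langle\lambda\rangle$ as a finitely generated abelian group, identify its lattice part (which is essentially $\Lambda$ modulo the saturation of $\mathbb{Z}\pi(\lambda)$), and verify that the projection $\pi'$ behaves compatibly with $\pi$ so that the case analysis above actually produces the claimed equality of ranks. Everything else---the splitting of the sum, the multiplicative behavior of $m$ under the projection, and the combinatorial bijection---is formal.
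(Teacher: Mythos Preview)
Your proof is correct and follows essentially the same route as the paper: split the defining sum for $M_X$ according to whether $\lambda\in A$, identify the $\lambda\notin A$ part with $M_{X_1}$ using $r(X)=r(X_1)$, and identify the $\lambda\in A$ part with $M_{X_2}$ via the bijection $A\mapsto\overline{A}$ together with the identities $|\overline{A}|=|A|-1$, $r(\overline{A})=r(A)-1$, $r(X_2)=r(X)-1$, $m(\overline{A})=m(A)$. Your case analysis on whether $\pi(\lambda)\in\langle\pi(B)\rangle_{\mathbb{R}}$ is a more explicit justification of the rank identity $r(\overline{A})=r(A)-1$ than the paper gives (the paper simply asserts it), and your caveat about needing $\pi(\lambda)\neq 0$ is well taken, since that is what makes the rank drop by exactly one under the quotient.
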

\begin{proof}
The sum expressing $M_X(x,y)$ splits into two parts. The first is over the sets $A\subseteq X_1$
$$\sum_{A\subseteq X_1} m(A) (x-1)^{r(X)-r(A)} (y-1)^{|A|-r(A)}=M_{X_1}(x,y)$$
since clearly $r(X)=r(X_1)$. The second part is over the sets $A$ such that $\lambda\in A$. For such sets we have that
$$|\overline{A}|=|A|-1,\; r(\overline{A})=r(A)-1,\; r(X_2)=r(X)-1,\; m(\overline{A})=m(A).$$
Therefore
$$\sum_{A\subseteq X, \lambda\in A} m(A) (x-1)^{r(X)-r(A)} (y-1)^{|A|-r(A)}=$$
$$\sum_{\overline{A}\subseteq X_2} m(\overline{A}) (x-1)^{r(X_2)-r(\overline{A})} (y-1)^{|\overline{A}|-r(\overline{A})}=M_{X_2}(x,y).$$
\end{proof}

%\begin{re}
%If in the previous proof we replace $m$ with the trivial weight $m(A)=1\:\forall A$, we get the proof of Theorem \ref{reT}.
%\end{re}

Now we can prove
\begin{te}\label{pos}
$M_X(x,y)$ is a polynomial with positive coefficients.
\end{te}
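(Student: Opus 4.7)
The plan is to induct on $|X|$, paralleling the proof of positivity for $T_X$ (Theorem \ref{poT}). The inductive step is immediate from Theorem \ref{rec}: if there exists a nonzero $\lambda\in X$ with $\pi(\lambda)\in\langle\pi(X\setminus\{\lambda\})\rangle_\mathbb{R}$, then $M_X=M_{X_1}+M_{X_2}$, where both summands are multiplicity Tutte polynomials of strictly smaller lists, hence positive by induction.

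The base case is the situation in which no such $\lambda$ exists. This forces every nonzero $\lambda\in X$ to have $\pi(\lambda)\neq 0$ (otherwise the condition would be satisfied trivially since $0$ lies in every real span), and these nonzero projections must then be linearly independent in $\Lambda\otimes\mathbb{R}$. Writing $X$ as $h$ copies of the identity of $\Gamma$ together with $Y=\{x_1,\ldots,x_k\}$ whose projections are linearly independent and nonzero, I would apply Remark \ref{sum} to the decomposition $\Gamma=\Gamma\times\{0\}$ (with the zero elements in the trivial factor) to obtain $M_X(x,y)=M_Y(x,y)\cdot y^h$. For $Y$, every subset $A\subseteq Y$ has $r(A)=|A|$, so all $(y-1)$-factors vanish and
$$M_Y(x,y)=\sum_{A\subseteq Y} m(A)(x-1)^{k-|A|}$$
is a polynomial in $x$ only. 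A short argument using that linear independence of $\pi(A)$ makes $\pi$ injective on $\langle A\rangle$ gives $m(A)=|\Gamma_t|\cdot n_A$, where $n_A=[\Lambda\cap\langle\pi(A)\rangle_\mathbb{R}:\langle\pi(A)\rangle_\mathbb{Z}]$.

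The main obstacle is therefore the purely lattice-theoretic claim that $\sum_A n_A(x-1)^{k-|A|}$ has nonnegative coefficients in $x$ for every linearly independent list of nonzero lattice vectors $\pi(x_1),\ldots,\pi(x_k)$. I would attempt this by a secondary induction on $k$, peeling off one vector and invoking the sub-multiplicative inequality $n_{A\cup B}\geq n_A\cdot n_B$ whenever $\pi(A)$ and $\pi(B)$ span transverse subspaces (which follows from the direct-sum inclusion $\Lambda\cap\langle\pi(A)\rangle_\mathbb{R}\oplus\Lambda\cap\langle\pi(B)\rangle_\mathbb{R}\subseteq\Lambda\cap\langle\pi(A\cup B)\rangle_\mathbb{R}$), together with inclusion-exclusion bookkeeping of the binomial coefficients appearing in each expansion $(x-1)^{k-|A|}$. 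This combinatorial positivity for lattices is the crux; once it is established, the full theorem follows from the reductions described above.
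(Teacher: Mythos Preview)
Your reduction via Theorem~\ref{rec} to the case where $\pi(Y)$ is linearly independent, and the subsequent splitting off of the factor $|\Gamma_t|\cdot y^h$, are correct and match the paper exactly; the formula $m(A)=|\Gamma_t|\,n_A$ for such $A$ is also right.

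The genuine gap is the last step. The submultiplicative inequality $n_{A\cup B}\ge n_A n_B$ that you propose to invoke is true, but it is too weak to force the coefficients of $\sum_{A\subseteq Y} n_A\,(x-1)^{k-|A|}$ to be nonnegative. Concretely, on a three-element ground set the assignment $n_\emptyset=n_{\{i\}}=1$, $n_{\{i,j\}}=3$, $n_{\{1,2,3\}}=3$ satisfies every instance of submultiplicativity, yet the resulting polynomial is
\[
(x-1)^3+3(x-1)^2+9(x-1)+3 \;=\; x^3+6x-4,
\]
with negative constant term. So no amount of inclusion--exclusion bookkeeping or peeling off vectors will extract positivity from submultiplicativity alone; a further property of genuine lattice indices is required, and your sketch does not supply one (indeed you flag this step as ``the crux'' without carrying it out). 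The paper closes this gap geometrically, in Lemma~\ref{Zba}: when $Y$ is a lattice basis, the coefficient of $x^j$ in $M_Y(x,1)$ is rewritten as $\sum_{|A|=k-j} h(A)$ with $h(A)=\sum_{B\subseteq A}(-1)^{|A|-|B|}n_B$, and $h(A)$ is then reinterpreted as the number of lattice points in the relative interior of the face $\mathcal{Z}(A)$ of the parallelepiped $\mathcal{Z}(Y)$, hence manifestly nonnegative. This integer-point interpretation is the missing idea.
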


\begin{proof}
By applying recursively the formula above, we need only consider lists that do not contain any $\lambda$ satisfying condition (2).
Any such list $Y$ is made of elements of some quotient $\Gamma(Y)$ of $\Gamma$, and is the disjoint union of a list $Y_0$ of $h$ zeros $(h\geq 0)$, and of a list $Y_1$ such that $\pi(Y_1)$ is a basis of $\Lambda(Y)\otimes \mathbb{R}$.
(Here we denote by $\pi$ the projection $\Gamma(Y)\rightarrow \Lambda(Y)$, where $\Gamma(Y)\simeq\Lambda(Y)\times \Gamma(Y)_t$ is the product of the lattice and of the torsion subgroup). We first notice that
$$M_{Y_0}=|\Gamma(Y)_t|\sum_{j=0}^{h}{h \choose j}(y-1)^j=|\Gamma(Y)_t|\big((y-1)+1\big)^{h}=|\Gamma(Y)_t|y^{h}.$$
Furthermore, it is easily seen that
$$M_Y(x,y)=M_{Y_0}(x,y) M_{Y_1}(x,y).$$
Finally, the positivity of $M_{Y_1}(x,y)$ will be proved in Lemma \ref{Zba}.
\end{proof}

\subsection{Statistics}

Usually, polynomials with positive coefficients encode some statistics. In other words, their coefficients \emph{count} something.

For example, the Tutte polynomial embodies two statistics on the set of the bases called internal and external activity. Although they can be stated for an abstract matroid (see for example \cite[Section 2.2.2]{li}), we give such definitions for a list $X$ of vectors. Let $B$ be a basis extracted from $X$.
 \begin{enumerate}
   \item We say that $v\in X\setminus B$ is \emph{externally active} for $B$ if $v$ is a linear combination of the elements of $B$ following it (in the total ordering fixed on $X$);
   \item we say that $v\in B$ is \emph{internally active} for $B$ if there is no element $w$ in $X$ preceeding $v$ such that $\{w\}\cup(B\setminus \{v\})$ is a basis.
   \item the number $e(B)$ of externally active elements is called the \emph{external activity} of $B$;
   \item the number $i(B)$ of internally active elements is called the \emph{internal activity} of $B$.
  \end{enumerate}

The following result is proved in \cite{Cr}:
\begin{te}\label{Cra}
$$T_X(x,y)=\sum_{B\subseteq X,\: B \text{basis}}x^{i(B)}y^{e(B)}.$$
\end{te}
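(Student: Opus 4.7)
The plan is to prove Crapo's formula by establishing an interval decomposition of $2^X$ indexed by the bases of $X$. For each basis $B$, let $I(B)\subseteq B$ and $E(B)\subseteq X\setminus B$ denote the sets of internally and externally active elements, and set $J_B\doteq\{A\subseteq X : B\setminus I(B)\subseteq A\subseteq B\cup E(B)\}$. I will show that $\{J_B\}_B$ partitions $2^X$ and that the exponents $r(X)-r(A)$ and $|A|-r(A)$ take a very simple form inside each $J_B$; Crapo's identity will then drop out of a binomial sum.

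The key combinatorial input is a non-interference lemma: if $e\in E(B)$ and $j\in I(B)$, then $j$ does not lie in the fundamental circuit $C(e,B)\subseteq B\cup\{e\}$. Indeed, if $j$ were in $C(e,B)$, then $(B\setminus\{j\})\cup\{e\}$ would still be a basis, so $e$ would lie in the fundamental cocircuit $C^*(j,B)$; internal activity of $j$ then forces $j<e$, while external activity of $e$ forces $e<j$, a contradiction. It follows that, for any $A\in J_B$, if we write $J\doteq B\setminus A\subseteq I(B)$ and $E\doteq A\setminus B\subseteq E(B)$, every $e\in E$ belongs to $\langle C(e,B)\setminus\{e\}\rangle\subseteq\langle B\setminus I(B)\rangle\subseteq\langle B\setminus J\rangle$, so $r(A)=|B\setminus J|=|B|-|J|$. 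Therefore $r(X)-r(A)=|J|$ and $|A|-r(A)=|E|$.

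For the partition itself, I will exhibit a map $\phi:2^X\to\mathcal{B}(X)$ (where $\mathcal{B}(X)$ denotes the set of bases) with $\phi^{-1}(B)=J_B$. A convenient recipe is a two-pass greedy: first scan the elements of $A$ in increasing order and build a basis $B_0\subseteq A$ of $\langle A\rangle$ by adding each element compatible with independence; then scan $X\setminus A$ in increasing order, adding each element that preserves independence, thus producing a basis $B=\phi(A)$ of $X$. One then verifies that the elements of $B\setminus A$ added in the second pass are internally active for $B$, and that the elements of $A\setminus B$ rejected in the first pass are externally active, so $A\in J_{\phi(A)}$; conversely, if $A\in J_{B'}$, a matroid-exchange argument forces $B'=\phi(A)$. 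I expect this verification---in particular, matching the greedy choices with the circuit and cocircuit minimality defining activity---to be the main technical step.

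Granted the decomposition and the rank formula, the theorem follows from
$$T_X(x,y)=\sum_{B\in\mathcal{B}(X)}\,\sum_{\substack{J\subseteq I(B)\\ E\subseteq E(B)}}(x-1)^{|J|}(y-1)^{|E|}=\sum_{B\in\mathcal{B}(X)} x^{|I(B)|}\,y^{|E(B)|},$$
where the second equality is the binomial theorem applied to each of the inner sums.
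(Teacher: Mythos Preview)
The paper does not prove this theorem; it merely states it and cites Crapo \cite{Cr}. So there is no proof in the paper to compare against, and your overall strategy---Crapo's interval decomposition $2^X=\bigsqcup_B J_B$ followed by the binomial identity---is exactly the classical one. Your non-interference lemma and the resulting rank computation on each $J_B$ are correct, as is the final summation.

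There is, however, a genuine error in your construction of $\phi$. Scanning $A$ in \emph{increasing} order and greedily building an independent set produces the lexicographically \emph{smallest} basis of $\langle A\rangle$; an element $a\in A$ rejected in this pass is dependent on elements of $B_0$ that \emph{precede} it, so $a$ is the \emph{largest} element of its fundamental circuit, hence externally \emph{passive} under the paper's convention, not active. Concretely, take $X=\{1,2,3\}$ to be three vectors in general position in a plane, ordered $1<2<3$, and let $A=X$. Your greedy accepts $1,2$ and rejects $3$, giving $\phi(A)=\{1,2\}$; but $E(\{1,2\})=\emptyset$ (since $3$ is not in the span of the elements of $\{1,2\}$ following it), so $A\notin J_{\{1,2\}}$. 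The interval actually containing $A$ is $J_{\{2,3\}}$.

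The fix is simple: scan $A$ in \emph{decreasing} order in the first pass (so that rejected elements are minimal in their circuits, hence externally active), and keep the second pass over $X\setminus A$ in increasing order (so that added elements are minimal in their cocircuits, hence internally active). With this correction the verification you outline goes through, and one does obtain $\phi^{-1}(B)=J_B$ for every basis $B$.
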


Hence the coefficients of $T_X(x,y)$ count the number of bases having a given internal and external activity.

Since $M_X(x,y)$ has positive coefficients too, it is natural to wonder what the statistics involved are.
\begin{prob}\label{IntComb}
Give a combinatorial interpretation of the coefficients of $M_X(x,y)$.
\end{prob}
Although we leave this question open, in Theorems \ref{inZ} and \ref{dDM} we show the meaning of the coefficients of $M_X(x,1)$ and $M_X(1,y)$ respectively.

We say that a basis $B$ of $X$ is a \emph{no-broken circuit} if $e(B)=0$. We denote
by $nbc(X)$ the number of no-broken circuit bases of $X$. It is clear from Theorem \ref{Cra} that
\begin{equation}
nbc(X)=T_{X}(1,0).
\end{equation}
We will use this formula in the following sections.

\section{Integer points in zonotopes}\label{IZ}

Let $X$ be a finite list of vectors contained in a lattice $\Lambda$ and generating the vector space $U=\Lambda\otimes \mathbb{R}$.
We say that a point of $U$ is \emph{integer} if it is contained in $\Lambda$. In this section we prove that $M_X(2,1)$ is equal to the number of integer points of the zonotope $\mathcal{Z}(X)$. Moreover, we compare this number with the volume. To do this, we have to move the zonotope to a "generic position". 

Following \cite[Section 1.3]{li}, we define the \emph{cut-locus} of the couple $(\Lambda, X)$ as the union of all hyperplanes in $U$ that are translations, under elements of $\Lambda$, of the linear hyperplanes spanned by subsets of $X$. Let $\underline{\varepsilon}$ be a vector of $U$ that does not lie in the cut-locus and has length $\varepsilon << 1$. Let $\mathcal{Z}(X)-\underline{\varepsilon}$ be the polytope obtained by translating $\mathcal{Z}(X)$ by $-\underline{\varepsilon}$, and let $\mathfrak{I}(X)$ be the set of its integer points:
$$\mathfrak{I}(X)\doteq \left(\mathcal{Z}(X)-\underline{\varepsilon}\right)\cap\Lambda.$$
It is intuitive (and proved in \cite[Prop 2.50]{li}) that this number is equal to the volume:
$$\left|\mathfrak{I}(X)\right|=vol\left(\mathcal{Z}(X)\right)=M_X(1,1)$$
by Proposition \ref{voZ}. We now prove a stronger result. 

Let us
choose $\underline{\varepsilon}$ so that $\mathcal{Z}(X)-\underline{\varepsilon}$ contains the origin $\underline{0}$. We partition
$\mathfrak{I}(X)$ as follows: set $\mathfrak{I}_n(X)=\{\underline{0}\}$, and for every $k=n-1, \ldots, 0$, let $\mathfrak{I}_k (X)$ be the set of elements of $\mathfrak{I} (X)$ that are contained in some $k-$codimensional face of $\mathcal{Z}(X)$ and that are not contained in $\mathfrak{I}_{h} (X)$ for $h>k$.
Then we have:

\begin{te}\label{inZ}
$$M_X(x,1)=\sum_{k=0}^n \left| \mathfrak{I}_k(X)\right| \,x^k.$$
\end{te}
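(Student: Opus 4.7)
My plan is to prove the identity by induction on $|X|$, using the deletion--restriction recursion (Theorem~\ref{rec}) as the engine. Picking $\lambda \in X$ dependent on $X_1 := X \setminus \{\lambda\}$, Theorem~\ref{rec} yields $M_X(x,1) = M_{X_1}(x,1) + M_{X_2}(x,1)$; it then suffices to establish the parallel geometric recursion
\[
\sum_k |\mathfrak{I}_k(X)|\, x^k \;=\; \sum_k |\mathfrak{I}_k(X_1)|\, x^k \;+\; \sum_k |\mathfrak{I}_k(X_2)|\, x^k.
\]
Since $\Lambda/\langle\lambda\rangle$ may carry torsion when $\lambda$ is not primitive, the definitions of $\mathcal{Z}(X)$ and $\mathfrak{I}_k(X)$ first need to be extended to lists in a finitely generated abelian group $\Gamma$, paralleling Section~\ref{DRs}: one takes the zonotope of the free-part projection and weights lattice points by the order of the torsion component.

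The main geometric input is the Minkowski decomposition $\mathcal{Z}(X) = \mathcal{Z}(X_1) + [0,1]\lambda$. Choosing $\underline{\varepsilon}$ generic simultaneously for $X$, $X_1$, and (after projecting along $\lambda$) for $X_2$, I partition the lattice points of $\mathcal{Z}(X) - \underline{\varepsilon}$ into those already lying in $\mathcal{Z}(X_1) - \underline{\varepsilon}$, accounting for the $M_{X_1}$ term, and those strictly above in the $\lambda$-direction, which under $\pi_\lambda$ correspond bijectively to $\mathfrak{I}(X_2)$. The face lattice of $\mathcal{Z}(X)$ itself splits into horizontal faces (inherited from $\mathcal{Z}(X_1)$, of the same codim) and vertical faces of the form $F_1 + [0,1]\lambda$ with $F_1$ a face of $\mathcal{Z}(X_1)$ (projecting onto a face of $\mathcal{Z}(X_2)$ of the same codim). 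Tracking the face type containing each integer point yields the codim-graded right-hand side.

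The base case is when $X$ is linearly independent, so no dependent $\lambda$ exists. Then $\mathcal{Z}(X)$ is a parallelepiped and both $M_X(x,1)$ and $\sum_k |\mathfrak{I}_k(X)|\, x^k$ factor as products over the elements of $X$ by Remark~\ref{sum}. The statement then reduces to the one-dimensional check $M_{\{x\}}(x,1) = x + (m-1)$, with $m := m(\{x\})$: after shift, the segment $[0,x]$ contains $m$ integer points, of which the origin contributes to $\mathfrak{I}_1 = \{0\}$ and the other $m-1$ interior points contribute to $\mathfrak{I}_0$; taking products over $X$ matches the factored form of $M_X(x,1)$ exactly, thanks to the way the $\mathfrak{I}_n = \{0\}$ convention absorbs the ``all coordinates zero'' term in the expansion.

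The main obstacle will be the careful bookkeeping of codimensions in the inductive step: the natural partition/bijection between lattice points of $\mathcal{Z}(X) - \underline{\varepsilon}$ and $\mathfrak{I}(X_1) \sqcup \mathfrak{I}(X_2)$ coming from the Minkowski decomposition does not, in general, preserve the codim of the minimal containing face point-by-point, so the codim-graded equality must be argued at the level of aggregate counts via the horizontal/vertical face split rather than by a codim-preserving bijection. One must also check that the convention $\mathfrak{I}_n = \{0\}$ is correctly reconciled between $X$ and $X_2$, i.e.\ that the origin of $\Gamma/\langle\lambda\rangle$ contributes to the corresponding term on the right-hand side in a way consistent with Theorem~\ref{rec}.
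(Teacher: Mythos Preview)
Your inductive strategy via deletion--restriction is exactly the paper's, and your description of the inductive step (Minkowski decomposition $\mathcal{Z}(X)=\mathcal{Z}(X_1)+[0,1]\lambda$, horizontal vs.\ vertical faces, torsion factor $m_\lambda$ on the $X_2$ side) matches what the paper does in the concluding proof of Theorem~\ref{inZ}.

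The gap is in your base case. When $X$ is a basis of $U$, Remark~\ref{sum} does \emph{not} apply: the multiplicity matroid of a lattice basis is in general not a direct sum of its rank-one pieces, because $m$ is not multiplicative over the singletons. Concretely, take $\Lambda=\mathbb{Z}^2$ and $X=\{(1,1),(1,-1)\}$. Then $m(\{(1,1)\})=m(\{(1,-1)\})=1$ but $m(X)=|\det X|=2$, so
\[
M_X(x,1)=(x-1)^2+2(x-1)+2=x^2+1,
\]
whereas your factorisation would give $\big(x+(1-1)\big)\big(x+(1-1)\big)=x^2$. The zonotope side fails in the same way: $\mathcal{Z}(X)-\underline{\varepsilon}$ contains the two lattice points $(0,0)\in\mathfrak{I}_2$ and $(1,0)\in\mathfrak{I}_0$, giving $x^2+1$, not the product $x\cdot x$ of the one-dimensional counts. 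So neither side factors, and the reduction to the one-dimensional check is invalid.

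The paper handles the independent base case by a direct computation instead: it defines $h(A)$ as the number of integer points internal to the face $\mathcal{Z}(A)$, proves the inclusion--exclusion identity $h(A)=\sum_{B\subseteq A}(-1)^{|A|-|B|}m(B)$, and then checks in Lemma~\ref{Zba} that the coefficient of $x^k$ in $M_X(x,1)=\sum_{A\subseteq X}m(A)(x-1)^{n-|A|}$ equals $\sum_{|A|=n-k}h(A)=|\mathfrak{I}_k(X)|$. You will need an argument of this sort (or an iterated restriction that does not stop at bases, with a suitably modified coloop step) to close the base case; the product shortcut does not work.
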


\begin{ex}\label{eZo}
Consider the list in $\mathbb{Z}^2$
$$X=\left\{ (3,3), (1,-1), (2,0)\right\}.$$
Then
$$M_X(x,y)=(x-1)^2+(3+1+2)(x-1)+(6+6+2)+2(y-1).$$
Hence $$M_X(x,1)=x^2+4x+9$$
and $M_X(2,1)=21$. Indeed the zonotope $\mathcal{Z}(X)$ has area $14$ and contains $21$ integer points, $14$ of which lie in $\mathcal{Z}(X)-\underline{\varepsilon}$, represented by the shaded portion in the image below. The sets $\mathfrak{I}_2(X)$, $\mathfrak{I}_1(X)$, and $\mathfrak{I}_0(X)$ contain 1, 4 and 9 points, respectively, each marked by the subscript of its set.

\includegraphics[width=80mm]{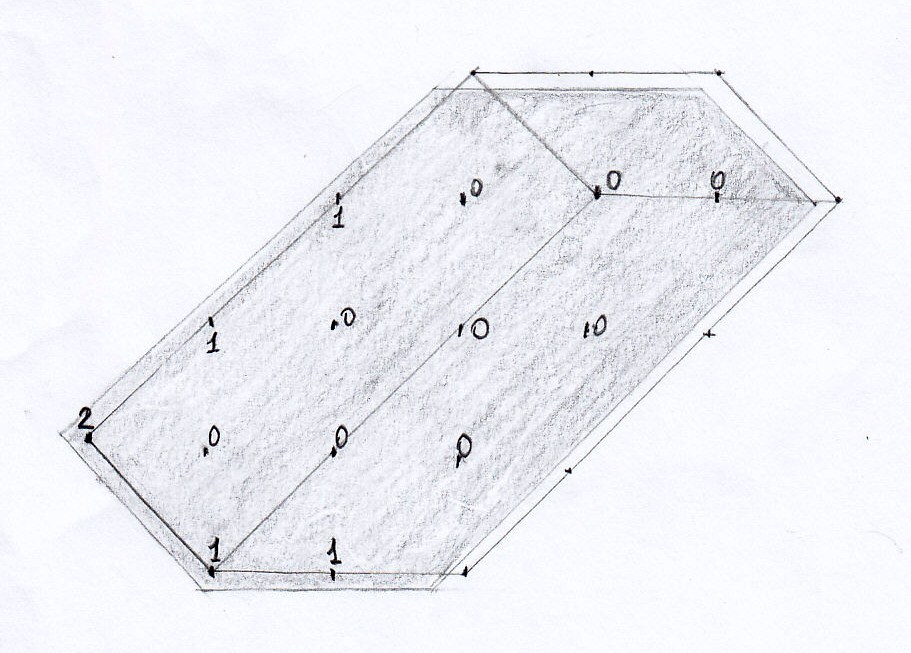}

\end{ex}

In order to prove the theorem above, we first assume $X$ to be linearly independent (and hence a basis for $U$).
%In this case $M_X(x,y)$ is a polynomial in which only the variable $x$ appears.
In this case the zonotope is called a \emph{parallelepiped}. In particular, for every $A\subseteq X$ the zonotope $\mathcal{Z}(A)$ is a face of $\mathcal{Z}(X)$.
Moreover, we can choose $\underline{\varepsilon}$ so that the faces of $\mathcal{Z}(X)$ whose interior is contained in $\mathcal{Z}(X)-\underline{\varepsilon}$ are precisely those of type $\mathcal{Z}(A)$ for some $A\subseteq X$: for instance take
 $$\underline{\varepsilon}=\sum_{\lambda\in X}\frac{\varepsilon}{n} \lambda.$$

We say that an integer point is \emph{internal} to a face of $\mathcal{Z}(X)$ if that face is the smallest face containing this point.
 We denote by $h(A)$ the number of integer points that are internal to $\mathcal{Z}(A)$.

\begin{lem}
For every $A\subset X$,
$$h(A)=\sum_{B\subseteq A}(-1)^{|A|-|B|}m(B).$$
\end{lem}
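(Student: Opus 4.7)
The plan is to count lattice points in the half-open parallelepiped
$$P_A \doteq \left\{\sum_{x\in A} t_x \, x : 0 \le t_x < 1\right\}$$
in two different ways, then invert the resulting identity by Möbius inversion on the Boolean lattice of subsets of $A$. Since $A$ is linearly independent (being a subset of the linearly independent list $X$), it forms a basis of the vector space $\langle A\rangle_\mathbb{R}$, so $P_A$ is a fundamental domain for the translation action of the sublattice $\langle A\rangle_\mathbb{Z}$ on $\langle A\rangle_\mathbb{R}$. The integer points of $\Lambda$ lying in $\langle A\rangle_\mathbb{R}$ are by definition the elements of $\Lambda_A$, so the first count gives
$$\#\bigl(P_A \cap \Lambda\bigr) = \bigl[\Lambda_A : \langle A\rangle_\mathbb{Z}\bigr] = m(A).$$

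For the second count, I would stratify $P_A$ by the support of the coefficient vector: each point $\sum_{x\in A} t_x x \in P_A$ is sent to the subset $B(t) = \{x \in A : t_x > 0\}$. This produces the disjoint decomposition
$$P_A = \bigsqcup_{B \subseteq A} \mathrm{int}\bigl(\mathcal{Z}(B)\bigr), \qquad \mathrm{int}\bigl(\mathcal{Z}(B)\bigr) \doteq \left\{\sum_{x \in B} t_x\, x : 0 < t_x < 1\right\},$$
where the stratum indexed by $B = \emptyset$ is just the origin. Each stratum is exactly the relative interior of the face $\mathcal{Z}(B)$ of $\mathcal{Z}(A)$, and by the definition of ``internal'' an integer point sits in this stratum precisely when $\mathcal{Z}(B)$ is the smallest face of $\mathcal{Z}(A)$ containing it. Counting integer points on both sides yields
$$m(A) = \sum_{B \subseteq A} h(B).$$

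Finally, applying Möbius inversion on the Boolean lattice $2^A$ to this identity inverts it into
$$h(A) = \sum_{B \subseteq A} (-1)^{|A|-|B|} m(B),$$
as claimed. There is no real obstacle here: the only genuine verification is that the $\Lambda$-integer points inside $\langle A\rangle_\mathbb{R}$ form exactly $\Lambda_A$, which is literal from the definition, and that the ``generic'' choice of $\underline{\varepsilon}$ made earlier in the section guarantees that the ``internal'' points of $\mathcal{Z}(A)$ counted by $h$ match the stratification by relative interiors used above.
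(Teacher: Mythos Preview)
Your proof is correct and follows essentially the same strategy as the paper: both establish the identity $m(A)=\sum_{B\subseteq A} h(B)$ and then apply M\"obius inversion on the Boolean lattice. The only cosmetic difference is in how that identity is obtained: the paper counts integer points in the $\underline{\varepsilon}$-shifted parallelepiped and invokes the equality $|\mathfrak{I}(X)|=\mathrm{vol}(\mathcal{Z}(X))=m(X)$, whereas you count lattice points in the half-open parallelepiped $P_A$ directly as a fundamental domain for $\langle A\rangle_{\mathbb{Z}}$ and stratify by support --- but these two counts are literally the same set of points, so the arguments coincide. (Incidentally, your derivation does not actually need the $\underline{\varepsilon}$-shift at all, since $h(A)$ is defined intrinsically via smallest containing face; your final remark about $\underline{\varepsilon}$ can safely be dropped.)
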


\begin{proof}
By construction $\mathcal{Z}(X)-\underline{\varepsilon}$ contains exactly the integer points that are internal to the faces $\mathcal{Z}(A),\: A\subseteq X$. Hence
$$\left|\mathfrak{I}(X)\right|=\sum_{A\subseteq X}h(A).$$
Moreover, by Formula (1) $m(X)$ is equal to the volume of $\mathcal{Z}(X)$. Thus we proved:
$$m(X)=\sum_{A\subseteq X}h(A).$$
We get the claim by the inclusion-exclusion principle, since the intersection of two faces $\mathcal{Z}(A_1)$, $\mathcal{Z}(A_2)$ is the face
$\mathcal{Z}(A_1\cap A_2)$.
\end{proof}

Now we prove

\begin{lem}\label{iZi}\label{Zba}
Let $X$ be a basis for $U$. Then
$$M_X(x,1)=\sum_{k=0}^n\left( \sum_{A\subseteq X,\: |A|=n-k}h(A)  \right) \,x^k.$$
\end{lem}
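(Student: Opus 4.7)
The plan is to combine two observations: (i) because $X$ is a basis, the polynomial $M_X(x,y)$ in fact does not depend on $y$, so $M_X(x,1)=\sum_{A\subseteq X} m(A)\,(x-1)^{n-|A|}$, and (ii) the previous lemma expressing $h(A)$ as an alternating sum of $m(B)$'s can be Möbius-inverted on the Boolean lattice of subsets of $X$ to give $m(A)=\sum_{B\subseteq A} h(B)$. Once these two facts are in hand, the result follows by swapping the order of summation and applying the binomial theorem.

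In more detail, since $X$ is linearly independent every $A\subseteq X$ satisfies $r(A)=|A|$, so the factor $(y-1)^{|A|-r(A)}$ is identically $1$ and
$$M_X(x,1)=\sum_{A\subseteq X} m(A)\,(x-1)^{n-|A|}.$$
Substituting $m(A)=\sum_{B\subseteq A} h(B)$ and exchanging the sums, the coefficient of $h(B)$ becomes
$$\sum_{A:\, B\subseteq A\subseteq X}(x-1)^{n-|A|}=\sum_{j=0}^{n-|B|}\binom{n-|B|}{j}(x-1)^{n-|B|-j}=x^{n-|B|},$$
where the penultimate equality counts subsets $A$ with $|A|-|B|=j$ in $X\setminus B$, and the last equality is the binomial theorem. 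Hence
$$M_X(x,1)=\sum_{B\subseteq X} h(B)\,x^{n-|B|},$$
and grouping the terms with $|B|=n-k$ yields the desired identity.

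The verification that $m(A)=\sum_{B\subseteq A}h(B)$ is just Möbius inversion on $2^A$ applied to the previous lemma, so it is essentially free. The only potentially delicate point is the binomial collapse, but this is a standard manipulation; no real obstacle appears. In effect the lemma says nothing more than that the two expansions $\sum_A m(A)(x-1)^{n-|A|}$ and $\sum_B h(B)\,x^{n-|B|}$ are related by the change of basis $x^k\leftrightarrow (x-1)^k$ of the polynomial ring, which is exactly the relation $m=\sum h$ viewed through the generating function $(1+(x-1))^{n-|B|}=x^{n-|B|}$.
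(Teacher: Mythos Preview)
Your proof is correct and follows essentially the same approach as the paper: both use the inclusion--exclusion relation between $m$ and $h$ from the preceding lemma together with a binomial identity. The only organizational difference is that you first invert the relation to write $m(A)=\sum_{B\subseteq A}h(B)$ and then collapse the double sum via $(1+(x-1))^{n-|B|}=x^{n-|B|}$, whereas the paper compares the coefficient of $x^k$ on each side directly; the underlying combinatorics is identical.
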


\begin{proof}
By definition
$$M_X(x,y)= \sum_{A\subseteq X} m(A) (x-1)^{n-|A|}.$$
The coefficient of $x^k$ in this expression is
$$\sum_{A\subseteq X,\: |A|\leq n-k}(-1)^{n-k-|A|} {{n-|A|} \choose k} \,m(A).$$
By the previous Lemma, our claim amounts to proving that the coefficient of $x^k$ is
$$ \sum_{A\subseteq X,\: |A|=n-k} \sum_{B\subseteq A}(-1)^{|A|-|B|}m(B)=\sum_{B\subseteq X,\: |B|\leq n-k}(-1)^{n-k-|B|} {{n-|B|} \choose k} \,m(B)$$
because every $B\subseteq X$ is contained in exactly
$${{n-|B|} \choose {n-k-|B|}}={{n-|B|} \choose k}$$
 sets $A\subseteq X$ of cardinality $n-k$.
\end{proof}

In this way the theorem is proved for linearly independent set of vectors, since
$$|\mathfrak{I}_k(X)|=\sum_{A\subseteq X,\: |A|=n-k}h(A).$$
As in Section \ref{DRs}, given a nonzero element $\lambda\in X$ we set $X_1\doteq X\setminus\{\lambda\}$
and we denote by $X_2$ the image of $X_1$ under the natural projection
$$p: U\longrightarrow U/\langle\lambda\rangle_{\mathbb{R}}.$$
Set
$m_\lambda\doteq m\big(\{\lambda\}\big)=\left[\Lambda\cap \langle \lambda\rangle_{\mathbb{R}} : \langle \lambda\rangle_{\mathbb{Z}}\right].$
 Note that $X_1$ defines a zonotope $\mathcal{Z}(X_1)\subset \mathcal{Z}(X)$, and $X_2$ defines a zonotope $\mathcal{Z}(X_2)$ in the quotient space $p(U)$.
 We briefly recall some results from \cite[Section 2.3]{li}. The reader is suggested to look at the picture below (in which $\mathcal{Z}(X_1)$ is the white rectangle, and $\lambda$ is the vector of coordinates $(2,0)$).
 The 1-parameter group of translations $u\mapsto u+t\lambda$, acting on $U$, has orbits that are the fibers of $p$. For every $u\in\mathcal{Z}(X_2)$, let $s(u)$ be the point $u+t\lambda$ where the translation group "exits" the zonotope $Z(X_1)$. Then $s(\mathcal{Z}(X_2))$ (the bold line in the picture) is a piece of the boundary of $\mathcal{Z}(X_1)$ naturally identified to $\mathcal{Z}(X_2)$. Furthermore, we have the following decomposition:
$$\mathcal{Z}(X)=\mathcal{Z}(X_1)\cup\widetilde{\mathcal{Z}}(X_2)$$
where $\mathcal{Z}(X_1)\cap\widetilde{\mathcal{Z}}(X_2)=s(\mathcal{Z}(X_2))$,
and the polytope $\widetilde{\mathcal{Z}}(X_2)$ (shaded in the picture) is a product of the polytope $s(\mathcal{Z}(X_2))$ and the segment $[0, \: m_\lambda]$. So $p$ maps $\widetilde{\mathcal{Z}}(X_2)$ on $\mathcal{Z}(X_2)$ with fiber $[0, \: m_\lambda]$ (\cite[Section 2.3]{li}).

\includegraphics[width=80mm]{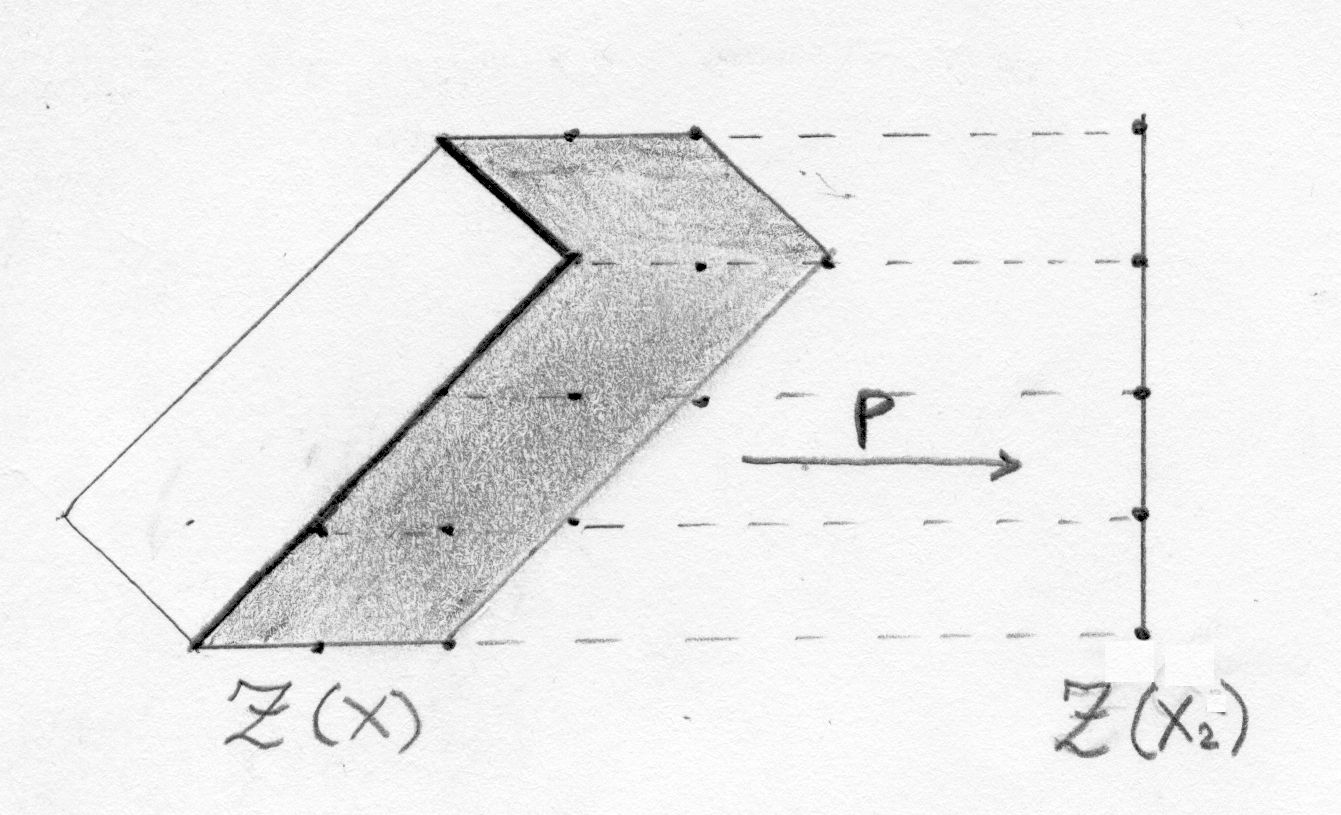}

We can now conclude the proof of Theorem \ref{inZ}.
\begin{proof}
 We can assume $X$ contains only nonzero elements. If $X$ is independent, we have Lemma \ref{iZi}. If $X$ has rank 0, the statement is trivial. Thus we assume $X$ to be dependent and of positive rank, we choose a vector $\lambda$ that lies in the cone spanned by the other vectors, and we proceed inductively by deletion-restriction. We notice that the restriction of $p$ to $\mathfrak{I}_k(X)\setminus \mathfrak{I}_k(X_1)$ maps this set onto $\mathfrak{I}_k(X_2)$, and the fiber of every point has cardinality $m_\lambda$. Thus
$$|\mathfrak{I}_k(X)|=|\mathfrak{I}_k(X_1)|+m_\lambda |\mathfrak{I}_k(X_2)|=M_{X_1}(x,1)+M_{X_2}(x,1)$$
by inductive hypothesis, since $m_\lambda$ is the torsion of $\Lambda/\langle\lambda\rangle_{\mathbb{Z}}$. Hence the claim follows by Theorem \ref{rec}.
\end{proof}

\includegraphics[width=80mm]{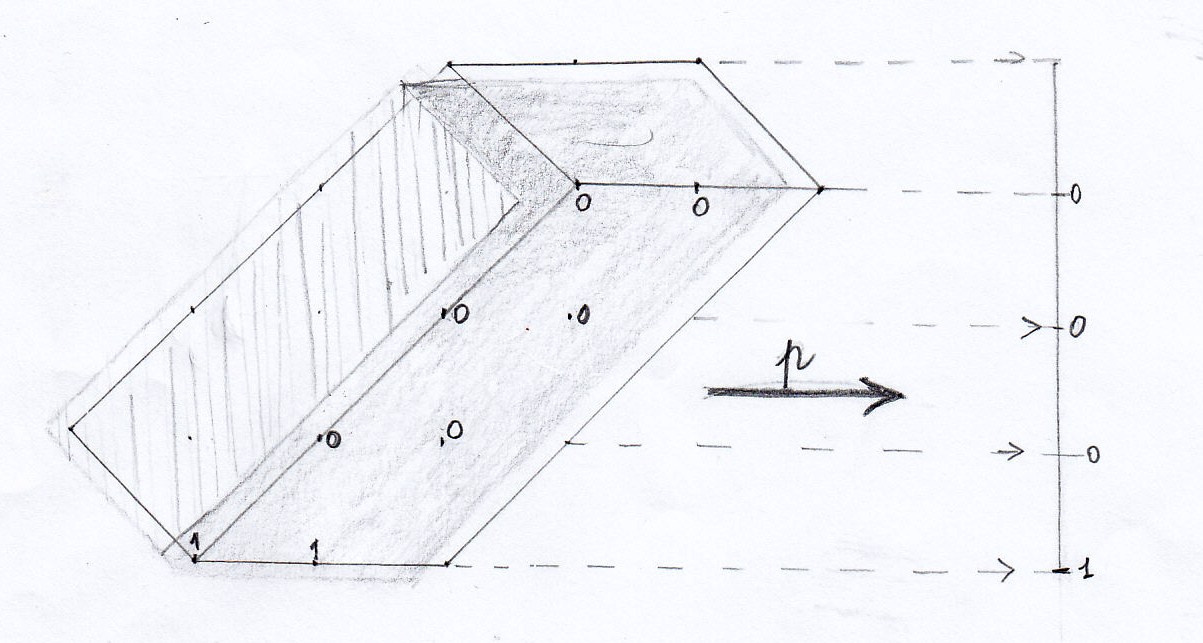}

In the same way we can prove

\begin{pr}\label{M21}
The number $\left| \mathcal{Z}(X)\cap\Lambda \right|$ of integer points in the zonotope is equal to $M_X(2,1)$.
\end{pr}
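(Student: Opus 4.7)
My plan is to replicate the proof of Theorem \ref{inZ}, with one change: the full intersection $\mathcal{Z}(X)\cap\Lambda$ replaces $\mathfrak{I}_k(X)$. I would induct on $|X|$ via the deletion-restriction of Theorem \ref{rec} and settle the linearly independent base case first. To close the recursion through the quotient $\Gamma/\langle\lambda\rangle$, which carries torsion, I would formulate the statement in the slightly more general form
$$M_Y(2,1)\;=\;|\Gamma_t|\cdot|\mathcal{Z}(\pi(Y))\cap\Lambda|$$
for lists $Y$ in any finitely generated abelian group $\Gamma=\Lambda\times\Gamma_t$; Proposition \ref{M21} is the torsion-free case $\Gamma_t=0$.

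For the base case, assume $\pi(X)$ is linearly independent, so that $\mathcal{Z}(\pi(X))$ is an $n$-parallelepiped. Its faces are parametrized by pairs $(A,\epsilon)$ with $A\subseteq X$ and $\epsilon\colon X\setminus A\to\{0,1\}$, each being a translate of $\mathcal{Z}(\pi(A))$ by an integer vector. Translation-invariance of $\Lambda$ places $h(A)$ lattice points in the relative interior of every such face, and summing yields $|\mathcal{Z}(\pi(X))\cap\Lambda|=\sum_{A\subseteq X} 2^{n-|A|}h(A)$. Lemma \ref{iZi}, together with the identity $m(B)=|\Gamma_t|\cdot[\Lambda_B:\langle\pi(B)\rangle]$ on independent $B$ (an easy consequence of the structure theorem), then identifies this sum with $M_X(2,1)/|\Gamma_t|$.

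For the inductive step, choose $\lambda\in X$ with $\pi(\lambda)$ dependent on $\pi(X_1)$ and use the decomposition $\mathcal{Z}(\pi(X))=\mathcal{Z}(\pi(X_1))\cup\widetilde{\mathcal{Z}}(\pi(X_2))$ with common face $s(\mathcal{Z}(\pi(X_2)))$ recalled in Section \ref{DRs}. Disjointifying by stripping the common face from the cylinder side gives
$$|\mathcal{Z}(\pi(X))\cap\Lambda|\;=\;|\mathcal{Z}(\pi(X_1))\cap\Lambda|\;+\;m_\lambda\cdot|\mathcal{Z}(\pi(X_2))\cap\pi(\Lambda)|,$$
since each half-open $\lambda$-fiber of $\widetilde{\mathcal{Z}}(\pi(X_2))\setminus s(\mathcal{Z}(\pi(X_2)))$ over an integer point of $\pi(\Lambda)$ meets $\Lambda$ in exactly $m_\lambda$ points. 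Multiplying by $|\Gamma_t|$ and invoking the inductive hypothesis — noting that the torsion of $\Gamma/\langle\lambda\rangle$ has order $m_\lambda|\Gamma_t|$ — converts the two summands into $M_{X_1}(2,1)+M_{X_2}(2,1)$, which equals $M_X(2,1)$ by Theorem \ref{rec}. The main obstacle is purely the torsion bookkeeping, exactly as in the proof of Theorem \ref{inZ}: one has to verify the two multiplicative identities highlighted above, both of which are routine consequences of the structure theorem for finitely generated abelian groups.
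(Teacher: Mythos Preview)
Your proposal is correct and follows essentially the same route as the paper's own proof: reduce by deletion--restriction (tracking the torsion factor, exactly as in the proof of Theorem~\ref{inZ}) to the case of a linearly independent list, and there count the lattice points face by face, observing that each $A\subseteq X$ indexes $2^{n-|A|}$ parallel faces, each with $h(A)$ interior points, so that the total is $\sum_A 2^{n-|A|}h(A)=M_X(2,1)$ by Lemma~\ref{iZi}. The only difference is cosmetic: you spell out the generalized statement $M_Y(2,1)=|\Gamma_t|\cdot|\mathcal{Z}(\pi(Y))\cap\Lambda|$ and the torsion identities needed to close the induction, whereas the paper compresses this into the phrase ``as in the previous proof''.
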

\begin{proof}
By applying deletion-restriction as in the previous proof, we can reduce to the case in which $X$ is a basis of $U$. Then in this basis, $\mathcal{Z}(X)$ is a parallelepiped. For every face $F$ we define $A_F$ as the subset of $X$ corresponding to the coordinates which are not constant on $F$. Since all the other coordinates are identically equal either to $0$ or to $1$, for every $A\subseteq X$ there are exactly $2^k$ faces $F$ such that $A_F=A$, $k= |X\setminus A|$. Among these faces, the only one contributing to $M_X(1,1)$ is the one whose constant coordinates are all equal to 0, i. e., $Z(A)$. On the other hand, to compute the total number of integer points we have to take all these $2^k$ faces. Since any two of them are disjoint and contain the same number of points in their interior, by Theorem \ref{inZ} we get the claim.
\end{proof}

\section{Application to arrangements}

In this Section we describe some geometrical objects related to
the lists considered in Section 2.2, and show that many of
their features are encoded in the polynomials $T_X(x,y)$ and
$M_X(x,y)$.

\subsection{Recall on hyperplane arrangements}

Let $X$ be a finite list of elements of a vector space $U$.
A \emph{hyperplane arrangement} $\mathcal{H}(X)$ is defined in the dual space $V=U^*$, by taking the orthogonal hyperplane of each element of
$X$. Conversely, given an arrangement of hyperplanes in a
vector space $V$, let us choose for each hyperplane a nonzero
vector in $V^*$ orthogonal to it. Let $X$ be the list of such
vectors. Since every element of $X$ is determined up to scalar
multiples, the matroid associated to $X$ is well defined. In
this way a Tutte polynomial is naturally associated to the
hyperplane arrangement.

The importance of the Tutte polynomial in the theory of
hyperplane arrangements is well known. Here we recall some
results which will be generalized in the next sections.

To every sublist $A\subseteq X$ we associate the subspace
$A^\bot$ of $V$ that is the intersection of the corresponding
hyperplanes of $\mathcal{H}(X)$. In other words, $A^\bot$ is the subspace of
vectors that are orthogonal to every element of $A$. Let
$\mathcal{L}(X)$ be the set of such subspaces, partially
ordered by reverse inclusion, and having as minimal element (which we denote by
$\mathbf{0}$) the whole space $V=\emptyset^\bot$.
$\mathcal{L}(X)$ is called the \emph{intersection poset} of the
arrangement, and is the main combinatorial object
associated to a hyperplane arrangement.

We also recall that to every finite poset $\mathcal{P}$ there is
an associated \emph{Moebius function}
$$\mu:\mathcal{P}\times \mathcal{P}\rightarrow \mathbb{Z}$$
recursively defined as follows:
$$\mu(L,M)=\begin{cases}
0  & \mbox{ if } L>M\\
1 &  \mbox{ if } L=M\\
- \sum_{L\leq N< M} \mu(L,N) & \mbox{ if } L<M.
\end{cases}$$

Notice that the poset $\mathcal{L}(X)$ is \emph{ranked} by the
dimension of the subspaces. We define \emph{characteristic
polynomial} of the poset as
$$\chi(q)\doteq \sum_{L\in \mathcal{L}(X)} \mu (\mathbf{0},L) q^{dim (L)}.$$

This is an important invariant of $\mathcal{H}(X)$.
Indeed, let
$\mathcal{M}_X$ be the complement in $V$ of the union of the
hyperplanes of $\mathcal{H}(X)$. Let $P(q)$ be \emph{Poincar\'{e}
polynomial} of $\mathcal{M}_X$, i.e. the polynomial having the $k-$th Betti number of
$\mathcal{M}_X$ as the
coefficient of $q^k$ .
If $V$ is a complex vector
space, by \cite{OSb} we have the following theorem.
\begin{te}
$$P(q)={(-q)}^n \chi(-1/q).$$
\end{te}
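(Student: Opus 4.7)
The plan is to match the two sides via the deletion-restriction recursion for hyperplane arrangements. Fix a nonzero $\lambda\in X$ that is dependent on $X_1\doteq X\setminus\{\lambda\}$, and let $X_2$ be the image of $X_1$ in $U/\langle\lambda\rangle$; dually, $X_2$ parametrizes the restricted arrangement inside the complex hyperplane $\lambda^\perp$. First I would establish
\[
\chi_X(q)=\chi_{X_1}(q)-\chi_{X_2}(q)
\]
by the standard deletion-restriction decomposition of $\mathcal{L}(X)$: split its elements according to whether they are contained in $\lambda^\perp$ or not (the former are naturally identified with $\mathcal{L}(X_2)$, viewed as subspaces of $\lambda^\perp$, while the latter sit inside $\mathcal{L}(X_1)$), and apply M\"obius inversion on each piece.

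Next I would prove the topological counterpart
\[
P_X(q)=P_{X_1}(q)+q\,P_{X_2}(q).
\]
Here $\mathcal{M}_X=\mathcal{M}_{X_1}\setminus(\mathcal{M}_{X_1}\cap\lambda^\perp)$, and $\mathcal{M}_{X_1}\cap\lambda^\perp$ coincides with the complement $\mathcal{M}_{X_2}$ of the restricted arrangement inside $\lambda^\perp$. The Gysin/residue long exact sequence for removal of this smooth complex codimension-one submanifold shifts cohomological degree by one; the key input is Brieskorn's lemma, which asserts that this sequence splits into short exact sequences. Reading off dimensions degree by degree yields the recursion.

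Finally, set $Q_X(q)\doteq(-q)^n\chi_X(-1/q)$. Since $X_2$ lives in ambient dimension $n-1$, one has $Q_{X_2}(q)=(-q)^{n-1}\chi_{X_2}(-1/q)$, and a routine substitution shows that $Q$ obeys the same recursion as $P$. For the base case, taking $X$ a basis of $U$ gives $\mathcal{M}_X\cong(\mathbb{C}^*)^n$ and so $P_X(q)=(1+q)^n$; on the other side $\mathcal{L}(X)$ is Boolean, $\chi_X(q)=(q-1)^n$, and $(-q)^n\chi_X(-1/q)=(1+q)^n$, as required. The hardest step is the topological splitting: surjectivity of the residue map is Brieskorn's theorem, and in the original Orlik--Solomon treatment it is packaged via the explicit presentation of $H^*(\mathcal{M}_X)$ by closed logarithmic forms $d\log(\alpha)/(2\pi i)$ indexed by the hyperplanes.
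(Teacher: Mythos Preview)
The paper does not give its own proof of this statement: it is quoted in Section~4.1 as the classical Orlik--Solomon theorem, with a reference to \cite[Theorems 10.5, 2.34 and 2.33]{li} for the proof. Your outline is correct and is precisely the standard deletion--restriction argument used there: the combinatorial recursion $\chi_X=\chi_{X_1}-\chi_{X_2}$, the topological recursion $P_X=P_{X_1}+q\,P_{X_2}$ coming from the Gysin sequence split by Brieskorn's lemma, and the Boolean base case. The only place your sketch is slightly thin is the combinatorial step---splitting $\mathcal{L}(X)$ by containment in $\lambda^\perp$ does not by itself separate the M\"obius contributions, since flats of $\mathcal{L}(X_1)$ can already lie inside $\lambda^\perp$ when $\lambda$ is dependent; this is usually handled either by a short direct M\"obius computation or, in the spirit of this paper, by combining the identity $(-1)^nT_X(1-q,0)=\chi(q)$ with Theorem~\ref{reT}. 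That is a matter of exposition, not a genuine gap.
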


If, on the other hand, $V$ is a real vector space, by \cite{Za} the number $Ch(X)$ of \emph{chambers} (i.e., connected components of $\mathcal{M}_X$) is:
\begin{te}
$$Ch(X)={(-1)}^n \chi(-1).$$
\end{te}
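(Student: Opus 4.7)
The plan is to show both sides of the identity satisfy the same deletion-restriction recursion and match on the base case $X=\emptyset$, in the spirit of Theorem~\ref{rec}. Pick any nonzero $\lambda\in X$, set $H\doteq\lambda^\bot$ and $X_1\doteq X\setminus\{\lambda\}$, and let $X''$ denote a list of vectors in $H^*$ cutting out the restricted arrangement $\mathcal{H}''$ whose hyperplanes are $H\cap K$ for $K\in\mathcal{H}(X_1)$.

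The first step is a purely geometric chamber recursion: each chamber of $\mathcal{H}(X_1)$ either is disjoint from $H$ (and persists as a chamber of $\mathcal{H}(X)$) or is sliced by $H$ into two chambers of $\mathcal{H}(X)$; the chambers of the second type biject with chambers of $\mathcal{H}''$ via $C\mapsto C\cap H$, yielding
\begin{equation*}
Ch(X)=Ch(X_1)+Ch(X'').
\end{equation*}
The second step is the analogous M\"obius-theoretic recursion $\chi_X(q)=\chi_{X_1}(q)-\chi_{X''}(q)$. To derive this I would partition the sum defining $\chi_X$ into flats $L\in\mathcal{L}(X)$ not contained in $H$, which are precisely the flats of $\mathcal{L}(X_1)$ not contained in $H$ and inherit the same M\"obius value (contributing $\chi_{X_1}(q)$, after accounting for the flats of $\mathcal{L}(X_1)$ that happen to lie in $H$), and flats contained in $H$, which form the interval $[H,\mathbf{1}]$ of $\mathcal{L}(X)$ and are naturally isomorphic to $\mathcal{L}(X'')$. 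A M\"obius-algebra identity relating $\mu_{\mathcal{L}(X)}(\mathbf{0},L)$ to $\mu_{\mathcal{L}(X'')}(H,L)$ for $L\subseteq H$ produces the $-\chi_{X''}(q)$ contribution.

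The base case $X=\emptyset$ is immediate: $\mathcal{M}_X=V$ has a single chamber and $\chi(q)=q^n$, so $(-1)^n\chi(-1)=1$. For the inductive step, applying the hypothesis to $\mathcal{H}(X_1)$ (one fewer hyperplane, ambient dimension $n$) and to $\mathcal{H}''$ (ambient dimension $n-1$) gives
\begin{equation*}
(-1)^n\chi_X(-1)=(-1)^n\chi_{X_1}(-1)+(-1)^{n-1}\chi_{X''}(-1)=Ch(X_1)+Ch(X'')=Ch(X).
\end{equation*}
The main obstacle is verifying the M\"obius recursion for $\chi$ cleanly, since the correspondence between flats of $\mathcal{L}(X)$, $\mathcal{L}(X_1)$, and $\mathcal{L}(X'')$ requires careful interval-by-interval comparison of M\"obius values; edge cases (for instance when $\lambda$ is a coloop, so $H$ meets the other hyperplanes in general position) also deserve a separate check. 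By contrast, the chamber recursion and the final induction are routine geometric and arithmetic bookkeeping.
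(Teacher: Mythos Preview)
The paper does not supply its own proof of this theorem; it is quoted as a known result of Zaslavsky, with a pointer to \cite[Theorem 2.34]{li} for a proof. So there is no ``paper's proof'' to compare against.

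Your deletion--restriction strategy is exactly the classical route to Zaslavsky's formula and is sound. Two remarks. First, the chamber recursion $Ch(X)=Ch(X_1)+Ch(X'')$ is correct as stated: convexity of chambers guarantees that any chamber of $\mathcal{H}(X_1)$ meeting $H$ is split into precisely two pieces, and those pieces correspond bijectively (via intersection with $H$) to chambers of the restriction. Second, the step you flag as the ``main obstacle''---the recursion $\chi_X(q)=\chi_{X_1}(q)-\chi_{X''}(q)$---need not be proved by a direct M\"obius-function comparison at all. Since the paper already records $\chi(q)=(-1)^{r(X)}T_X(1-q,0)$ (Theorem~4.3) and $T_X=T_{X_1}+T_{X_2}$ (Theorem~\ref{reT}), the recursion for $\chi$ follows in one line; the sign flip comes from $r(X_2)=r(X)-1$. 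This sidesteps the interval-by-interval M\"obius bookkeeping entirely. The coloop case you mention as an edge case is genuine (Theorem~\ref{reT} assumes $\lambda$ is dependent), but it is easily handled: when $\lambda$ is a coloop, $\mathcal{H}(X_1)$ is a cylinder over $\mathcal{H}''$, every chamber of $\mathcal{H}(X_1)$ meets $H$, and both sides simply double.
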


The Tutte polynomial
$T_X(x,y)$ turns out to be a stronger invariant, in the
following sense. Assume that $\underline{0}\notin X$; then

\begin{te}
$$(-1)^n T_X(1-q,0)= \chi(q).$$
\end{te}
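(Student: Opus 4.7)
My plan is to combine the Whitney rank expansion of $T_X$ with Möbius inversion on the intersection poset $\mathcal{L}(X)$. A direct computation from the definition, using $r(X)=n$, gives
$$(-1)^n T_X(1-q,0)=\sum_{A\subseteq X}(-1)^{|A|}q^{\,n-r(A)}.$$
Since $\dim A^{\bot}=n-r(A)$, I would group this sum according to the subspace $L:=A^{\bot}\in\mathcal{L}(X)$:
$$(-1)^n T_X(1-q,0)=\sum_{L\in\mathcal{L}(X)}q^{\dim L}\,f(L),\qquad f(L):=\sum_{\substack{A\subseteq X\\ A^{\bot}=L}}(-1)^{|A|}.$$
Comparing term-by-term with the definition of $\chi(q)$, the theorem reduces to the classical Whitney identity $f(L)=\mu(\mathbf{0},L)$ for every $L\in\mathcal{L}(X)$.

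To prove this identity, I would introduce the sublist $X_L:=\{\alpha\in X:\alpha|_L=0\}$ and the auxiliary function $g(L):=\sum_{A\subseteq X_L}(-1)^{|A|}$. The equivalences $A\subseteq X_L\Longleftrightarrow L\subseteq A^{\bot}\Longleftrightarrow A^{\bot}\leq L$ (in the reverse-inclusion order of $\mathcal{L}(X)$) exhibit $g$ as the zeta transform of $f$:
$$g(L)=\sum_{M\in\mathcal{L}(X),\,M\leq L}f(M).$$
The hypothesis $\underline{0}\notin X$ guarantees $X_V=\emptyset$, hence $g(\mathbf{0})=g(V)=1$. On the other hand, every $L\neq V$ in $\mathcal{L}(X)$ is cut out by some nonempty sublist $A_0\subseteq X$, so $X_L\supseteq A_0$ is nonempty and a telescoping argument yields $g(L)=(1-1)^{|X_L|}=0$. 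Möbius inversion on $\mathcal{L}(X)$ then collapses to a single surviving term,
$$f(L)=\sum_{M\leq L}\mu(M,L)\,g(M)=\mu(\mathbf{0},L),$$
which is the desired identity.

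I expect the only delicate step to be the reorganization of the Whitney sum by intersection subspaces and the observation that the zeta transform $g$ vanishes away from $\mathbf{0}$; this is precisely where the assumption $\underline{0}\notin X$ is needed (to rule out $g(V)=0$). Once that vanishing is in place, the rest is mechanical Möbius inversion on a finite ranked poset, so no further bookkeeping should be required.
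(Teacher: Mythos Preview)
Your argument is correct. The paper does not actually prove this statement itself; it simply refers the reader to \cite[Theorem~2.33]{li}, so there is no in-paper proof to compare against directly. That said, your method---expanding $(-1)^nT_X(1-q,0)=\sum_{A\subseteq X}(-1)^{|A|}q^{\,n-r(A)}$, grouping by $L=A^\bot$, and then establishing the Whitney identity $\sum_{A:\,A^\bot=L}(-1)^{|A|}=\mu(\mathbf{0},L)$ by computing its zeta transform $g(L)=(1-1)^{|X_L|}$ and inverting---is exactly the strategy the paper employs for the toric analogue, Theorem~\ref{Mch}. There the corresponding identity is Lemma~4.5, proved by induction on codimension using the recursive definition of $\mu$; this is the same M\"obius-inversion step you carry out, just phrased inductively. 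One cosmetic remark: the evaluation $\sum_{A\subseteq X_L}(-1)^{|A|}=(1-1)^{|X_L|}$ is the binomial theorem rather than a ``telescoping argument'', but the mathematics is fine.
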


The proof of these theorems can be found, for example, in \cite[Theorems 10.5, 2.34 and 2.33]{li}.

Notice that in the present section we have only considered \emph{central} arrangements, i.e. arrangements in which all the hyperplanes contain the origin. More generally, one may consider arrangements of affine hyperplanes in affine spaces. In this setting, Ardila defined a notion of \emph{semimatroid} and a Tutte polynomial (see \cite{Ar}).

\subsection{Toric arrangements and their generalizations}

Let $\Gamma=\Lambda\times \Gamma_t$ be a finitely generated abelian group, and define
$$T_\Gamma\doteq {Hom(\Gamma,\mathbb{C}^*)}.$$
$T_\Gamma$ has a natural structure of abelian linear algebraic group:
indeed it is the direct product of a complex torus $T_\Lambda$ of the same rank
as $\Lambda$ and of the finite group ${\Gamma_t}^*$ dual to
${\Gamma_t}$ (and isomorphic to it).

Moreover, $\Gamma$ is identified with the
group of characters of $T_\Gamma$: indeed given $\lambda\in \Lambda$
and $t\in T_\Gamma$ we can take any representative $\varphi_t\in
Hom(\Gamma,\mathbb{C})$ of $t$ and set
$$\lambda(t)\doteq e^{2\pi i \varphi_t(\lambda)}.$$
When this is not ambiguous we will denote $T_\Gamma$ by $T$.

Let $X\subset \Lambda$ be a finite subset spanning a sublattice
of $\Lambda$ of finite index. The kernel of every character
$\chi\in X$ is a (non-connected) hypersurface in $T$:
$$H_\chi\doteq \big\{ t\in T |\chi(t)=1 \big\}.$$
The collection $\mathcal{T}(X)=\left\{H_\chi, \chi\in
X\right\}$ is called the \emph{generalized toric arrangement}
defined by $X$ on $T$.

We denote by $\rx$ the complement of the arrangement:
$$\rx\doteq T\setminus\bigcup_{\chi\in X}H_\chi$$
and by $\mathcal{C}_X$ the set of all the
connected components of all the intersections of the
hypersurfaces $H_\chi$, ordered by reverse inclusion and having
as minimal elements the connected components of $T$.

Since $rank(\Lambda)= dim(T)$, the maximal elements of
$\mathcal{C}(X)$ are 0-dimensional, hence (since they are
connected) they are points. We denote by $\mathcal{C}_0(X)$ the
set of such layers, which we call the \emph{points} of the
arrangement.

Given $A\subseteq X$ let us define
$$H_A\doteq \bigcap_{\lambda\in A}H_\lambda.$$

\begin{lem}\label{mAc}
$m(A)$ is equal to the number of connected components of $H_A$.
\end{lem}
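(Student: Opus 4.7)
The plan is to use character-group duality to identify $H_A$ with $\text{Hom}$ of an explicit finitely generated abelian group, and then count its connected components by splitting off the torsion part.

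First I would observe that since $T_\Gamma = \text{Hom}(\Gamma,\mathbb{C}^*)$, a point $t \in T_\Gamma$ lies in $H_A$ iff $t(\chi)=1$ for every $\chi \in A$, iff $t$ factors through the quotient $\Gamma/\langle A \rangle$. This gives the natural identification
$$H_A \;\cong\; \text{Hom}\bigl(\Gamma/\langle A \rangle,\,\mathbb{C}^*\bigr).$$
Next, by the structure theorem for finitely generated abelian groups I would write $\Gamma/\langle A \rangle \cong \mathbb{Z}^r \oplus F$, with $F$ the (finite) torsion subgroup. Since $\text{Hom}(-,\mathbb{C}^*)$ turns direct sums into direct products, this yields an isomorphism of topological groups
$$H_A \;\cong\; (\mathbb{C}^*)^r \times \text{Hom}(F,\mathbb{C}^*).$$
Here $(\mathbb{C}^*)^r$ is connected, while $\text{Hom}(F,\mathbb{C}^*)$ is a finite discrete group of order $|F|$; hence $H_A$ has exactly $|F|$ connected components.

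Finally I would identify $|F|$ with $m(A)$. By construction $F$ is the torsion subgroup of $\Gamma/\langle A \rangle$, and its preimage under the projection $\Gamma \to \Gamma/\langle A \rangle$ is the set of $\gamma \in \Gamma$ with some nonzero multiple in $\langle A \rangle$. That preimage is by definition the largest subgroup of $\Gamma$ in which $\langle A \rangle$ has finite index, which Section 3.2 identifies as $\Gamma_A = \Lambda_A \times \Gamma_t$. Consequently $F \cong \Gamma_A/\langle A \rangle$ and $|F| = [\Gamma_A : \langle A \rangle] = m(A)$, proving the claim.

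The argument is essentially formal. The only point that requires a small verification is the identification of the preimage of $\text{torsion}(\Gamma/\langle A \rangle)$ with $\Gamma_A$, which I would spell out using the characterization of $\Gamma_A$ as the maximal subgroup in which $\langle A \rangle$ sits with finite index; everything else is just duality plus the structure theorem.
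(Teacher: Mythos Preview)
Your argument is correct and is in fact cleaner than the paper's own proof. Both proofs ultimately rest on Pontryagin duality, but they are organized differently.

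The paper first treats the full-rank case, asserting that $m(X)=|\mathcal{C}_0(X)|$ is ``clear by definition'' (which already implicitly uses the duality $H_X\cong\text{Hom}(\Gamma/\langle X\rangle,\mathbb{C}^*)$ for the finite group $\Gamma/\langle X\rangle$). For a general $A$, it then quotients geometrically by the identity component $H_A^0$, using $T_\Gamma/H_A^0\cong T_{\Gamma_A}$ to reduce to the $0$-dimensional case inside $T_{\Gamma_A}$.

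You instead apply duality once, uniformly: $H_A\cong\text{Hom}(\Gamma/\langle A\rangle,\mathbb{C}^*)\cong(\mathbb{C}^*)^r\times\text{Hom}(F,\mathbb{C}^*)$, read off the $|F|$ components, and identify $F$ with $\Gamma_A/\langle A\rangle$ via the characterization of $\Gamma_A$ as the largest subgroup containing $\langle A\rangle$ with finite index. This bypasses the geometric quotient step entirely and makes explicit the point the paper leaves as ``clear''. The paper's route, on the other hand, makes visible the map $T_\Gamma\twoheadrightarrow T_{\Gamma_A}$, which is conceptually useful elsewhere in the theory of toric arrangements even if not strictly needed here.
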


\begin{proof}
It is clear by definition that $m(X)=|\mathcal{C}_0(X)|$.
Then for every $A\subseteq X$, we have that
$$|\mathcal{C}(A)^0|=m(A)$$
where $\mathcal{C}(A)^0$ is the set of the points of the arrangement $\mathcal{T}(A)$ defined by $A$ in $T_{\Gamma_A}$.
Now let ${H_A}^0$ be the connected component of $H_A$ that contains the identity. This is a subtorus of $T_\Gamma$, and the quotient map
$$T_\Gamma\twoheadrightarrow T_\Gamma/{H_A}^0\simeq T_{\Gamma_A}$$
induces a bijection between the connected components of $H_A$ and the points of $\mathcal{T}(A)$.
\end{proof}

In particular, when $\Gamma$ is a lattice, $T$ is a torus and
$\mathcal{T}(X)$ is called the \emph{toric arrangement} defined
by $X$. Such arrangements have been studied for example in
\cite{L2}, \cite{DPt}, \cite{Mo}, \cite{Mw}; see \cite{li} for
a complete reference. In particular, the complement $\rx$ has
been described topologically and geometrically. In this
description the poset $\mathcal{C}(X)$ plays a major role, for
many aspects analogous to that of the intersection poset for
hyperplane arrangements (see \cite{DPt}, \cite{Mw}).

We will now explain the importance in this framework of the polynomial $M_X(x,y)$ defined in Section 3.3.

\subsection{Characteristic polynomial}

Let $\mu$ be the Moebius function of $\mathcal{C}(X)$. Notice
that we have a natural rank function given by the dimension of
the layers.
For every $C\in\mathcal{C}(X)$, let $T_C$ be the connected component of $T$ that contains $C$.
We can define the characteristic polynomial of
$\mathcal{C}(X)$ as
$$\chi(q)\doteq \sum_{C\in \mathcal{C}(X)} \mu (T_C,C) q^{dim (C)}.$$
In order to relate this polynomial with $M_X(x,y)$, we prove the following fact. Let
us assume that $X$ does not contain $\underline{0}$. For every
$C\in \mathcal{C}(X)$, set
$$\mathcal{D}(C)\doteq \left\{A\subseteq X \:|\: C \mbox{ is a connected component of } H_A\right\}.$$

\begin{lem}
$$\mu (T_C,C)=\sum_{A\in\mathcal{D}(C)} (-1)^{|A|}.$$
\end{lem}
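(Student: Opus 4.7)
The plan is to apply Möbius inversion in the poset $\mathcal{C}(X)$. For each layer $D$, set
$$f(D) := \sum_{A \in \mathcal{D}(D)} (-1)^{|A|},$$
and aim to show that for every $D$ with $T_C \leq D$ in the poset,
\begin{equation*}
\sum_{E \colon T_C \leq E \leq D} f(E) \;=\; \delta_{T_C,D}. \tag{$\star$}
\end{equation*}
This is exactly the defining recursion of $\mu(T_C, \cdot)$, so by uniqueness $f(D) = \mu(T_C, D)$ on the principal filter above $T_C$; specialising to $D = C$ then yields the claim.

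To verify $(\star)$, swap the order of summation on its left-hand side: for each $A \subseteq X$ we must count the layers $E \in [T_C, D]$ which are connected components of $H_A$. Distinct components of $H_A$ are disjoint, and $D$ is connected, so at most one component of $H_A$ contains $D$; when $D \subseteq H_A$ the unique such component $E$ automatically satisfies $E \subseteq T_C$ (it is connected and meets the component $T_C$). Thus the left-hand side collapses to $\sum_{A \colon D \subseteq H_A}(-1)^{|A|}$. Introducing $X_D := \{\chi \in X \colon \chi|_D \equiv 1\}$, the condition $D \subseteq H_A$ is equivalent to $A \subseteq X_D$, and the sum equals $\sum_{A \subseteq X_D}(-1)^{|A|}$, which is $1$ when $X_D = \emptyset$ and $0$ otherwise.

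It remains to check that $X_D$ is empty precisely when $D$ coincides with its ambient component $T_C$. Using the isomorphism $T_\Gamma \cong T_\Lambda \times \Gamma_t^*$, a character $\chi \in \Lambda$ acts on a component $T_\Lambda \times \{g\}$ by $\chi(t,g) = \chi(t)$, so no non-zero $\chi \in \Lambda$ can be identically $1$ on a full connected component of $T$. Since $\underline{0} \notin X$ by assumption, this shows $X_D = \emptyset$ iff $D$ is itself a component of $T$, iff $D = T_C$; if instead $D \subsetneq T_C$, the defining characters of $D$ lie in $X_D$, making it non-empty. This establishes $(\star)$. The main mild subtlety lies in tracking the reverse-inclusion poset structure and justifying the "unique component of $H_A$ containing $D$" step; the remainder is standard inclusion-exclusion combined with the uniqueness of the Möbius function.
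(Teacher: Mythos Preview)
Your proof is correct and follows essentially the same route as the paper: both reduce to showing that $\sum_{E\supseteq D}\sum_{A\in\mathcal{D}(E)}(-1)^{|A|}$ equals $\sum_{A\subseteq X_D}(-1)^{|A|}$, which vanishes unless $D$ is a full component of $T$. The only cosmetic difference is that you verify the defining recursion of $\mu(T_C,\cdot)$ for all $D$ at once and invoke uniqueness, whereas the paper phrases the same computation as an induction on the codimension of $C$ (with $B:=X_C$ playing the role of your $X_D$).
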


\begin{proof}
By induction on the codimension of $C$. If it is 0 or 1, the
statement is trivial. Otherwise, by the inductive hypothesis
$$\mu (T_C,C)=-\sum_{D\supsetneq C}\mu (T_C,D)=-\sum _{D\supsetneq C}\sum_{A\in \mathcal{D}(D)}(-1)^{|A|}.$$
Proving that this sum is equal to the claimed one amounts to
prove that
$$\sum _{D\supseteq C}\sum_{A\in \mathcal{D}(D)}(-1)^{|A|}=0.$$
Now, let $B$ be the largest (hence minimum with respect to reverse inclusion) element of $\mathcal{D}(C)$. Every
$A\in \mathcal{D}(D)$ for $D\supseteq C$ is a subset of $B$,
and conversely every $A\subseteq B$ is in $\mathcal{D}(D)$ for
exactly one $D\supseteq C$ (if there were two such layers $D$,
their union would be connected). Therefore
$$\sum _{D\supseteq C}\sum_{A\in \mathcal{D}(D)}(-1)^{|A|}=\sum_{A\subseteq B }(-1)^{|A|}=0$$
where the last equality is an elementary combinatorial fact,
which is checked by looking at the binomial coefficients of
$(1-1)^k$.
\end{proof}

\begin{te}\label{Mch}
$$(-1)^n M_X(1-q,0)= \chi(q)$$
\end{te}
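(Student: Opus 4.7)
The plan is to compute both sides explicitly in terms of a sum over subsets $A\subseteq X$ and match them coefficient by coefficient, using the previous lemma and Lemma \ref{mAc} as the two main inputs.

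First I would substitute into the definition of $M_X$. Setting $y=0$ kills all terms with $|A|>r(A)$ only after factoring $(-1)^{|A|-r(A)}$; more precisely, writing $n=r(X)$,
\[
M_X(1-q,0)=\sum_{A\subseteq X} m(A)\,(-q)^{n-r(A)}(-1)^{|A|-r(A)}.
\]
Multiplying by $(-1)^n$ and collecting signs (the exponent $n+(n-r(A))+(|A|-r(A))$ has the same parity as $|A|$), this simplifies to
\[
(-1)^n M_X(1-q,0)=\sum_{A\subseteq X} m(A)\,(-1)^{|A|}\,q^{\,n-r(A)}.
\]

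Next I would rewrite $\chi(q)$ as a sum over $A$. By the lemma proved just above,
\[
\chi(q)=\sum_{C\in\mathcal{C}(X)} \mu(T_C,C)\,q^{\dim C}
      =\sum_{C\in\mathcal{C}(X)}\sum_{A\in\mathcal{D}(C)} (-1)^{|A|}\,q^{\dim C}.
\]
Exchanging the order of summation, each $A\subseteq X$ contributes once for every layer $C$ that is a connected component of $H_A$; all such $C$ have the same dimension $n-r(A)$ (this is the dimension of $H_A$ by the definition of $r(A)$), and by Lemma \ref{mAc} there are exactly $m(A)$ of them. Hence
\[
\chi(q)=\sum_{A\subseteq X} m(A)\,(-1)^{|A|}\,q^{\,n-r(A)},
\]
which matches the expression for $(-1)^n M_X(1-q,0)$.

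The argument is essentially bookkeeping once the two lemmas are in place, so I do not expect a real obstacle; the only points that require attention are the parity computation for the overall sign and the observation that all connected components of $H_A$ share the common dimension $n-r(A)$, which allows the $q^{\dim C}$ factor to be pulled out of the inner sum and paired with the multiplicity $m(A)$.
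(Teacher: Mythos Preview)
Your argument is correct and is essentially the same as the paper's: both expand $(-1)^n M_X(1-q,0)$ via the definition, perform the same parity reduction, and then match with $\chi(q)$ by combining Lemma~4.5 with Lemma~\ref{mAc} through an exchange of summation over pairs $(A,C)$ with $A\in\mathcal{D}(C)$. The only cosmetic difference is that the paper phrases the comparison as an equality for each fixed $k=n-r(A)=\dim C$ (its Formula~(4)), while you carry the power $q^{n-r(A)}$ through the computation directly.
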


\begin{proof}
By definition we must prove that
$$(-1)^n \sum_{A\subseteq X} m(A)(-q)^{n-r(A)}(-1)^{|A|-r(A)}=\sum_{C\in \mathcal{C}(X)} \mu(T_C,C) q^{dim C}.$$
We remark that $$dim(C)=n-r(A)\:\forall A\in \mathcal{D}(C)$$
and $$(n-r(A))+(|A|-r(A))+n\equiv |A| \:(mod\: 2).$$ Thus we
have to prove that for every $k=0,\dots, n$,
\begin{equation}
\sum_{A\subseteq X, n-r(A)=k} m(A)(-1)^{|A|}=\sum_{C\in \mathcal{C}(X), dim(C)=k}\mu(T_C,C).
\end{equation}

By Lemma \ref{mAc}, each $A$ is in $\mathcal{D}(C)$ for exactly $m(A)$ layers $C$.
Then Formula (4) is a consequence of Lemma 4.5, since $(-1)^{|A|}$ appears $m(A)$ times in the sum.
\end{proof}

\begin{ex}\label{exa}
Take $T=(\mathbb{C}^*)^2$ with coordinates $(t,s)$ and
$$X=\left\{ (2,0), (0,2), (1,1), (1,-1)\right\}$$
defining equations:
$$t^2=1, s^2=1, ts=1, ts^{-1}=1.$$
 The hypersurfaces $H_{t^2}$ and $H_{s^2}$ have two connected components each; $H_{ts}$ and $H_{ts^{-1}}$ are connected (but their intersection is not). The  $0-$dimensional layers are
$$C_1=(1,1),\: C_2=(-1,-1),\: C_3=(1,-1),\: C_4=(-1,1).$$

Notice that $C_1$ and $C_2$ are contained in 4 layers of
dimension 1 each, while each of $C_3$ and $C_4$ lies in 2
layers of dimension 1. Then $\mu (T,C)=-1$ for each of
the six $1-$dimensional layers $C$, and
$$\mu (T,C_1)=\mu (T,C_2)=-(1-4)=3$$
$$\mu (T,C_3)=\mu (T,C_4)=-(1-2)=1.$$
Hence
$$\chi(q)=q^2-6q+8.$$
The polynomial $M_X(x,y)$ is composed by the following summands:
\begin{itemize}
  \item $(x-1)^2$, corresponding to the empty set;
  \item $6(x-1)$, corresponding to the 4 singletons, each
      giving contribution $(x-1)$ or $2(x-1)$;
  \item $14$, corresponding to the 6 pairs: indeed, the
      basis $X=\left\{ (2,0), (0,2)\right\}$ spans a sublattice of index $4$, while the other bases span sublattices of index $2$;
  \item $8(y-1)$, corresponding to the 4 triples, each
      contributing with $2(y-1)$;
  \item $2(y-1)^2$, corresponding to the whole set $X$.
\end{itemize}
Hence
$$M_X(x,y)=x^2+2y^2+4x+4y+3.$$
Notice that
$$M_X(1-q,0)=q^2-6q+8=\chi(q)$$
as claimed in Theorem \ref{Mch}.
\end{ex}

\subsection{Poincar\'{e} polynomial}

For every $C\in\mathcal{C}_X$, let us define
$$X_C\doteq \left\{\chi\in X | H_\chi\supseteq C\right\}.$$

\begin{re}
The set $X_C$ defines a hyperplane arrangement in the vector space
$V_C\doteq V/{X_C}^\bot$. Let $\mathcal{L}(X_C)$ be its
intersection poset. Let $\mathcal{C}(X,C)$ be the poset of the
elements of $\mathcal{C}(X)$ that contain $C$. The map
$$\psi: \mathcal{C}(X,C)\rightarrow\mathcal{L}(X_C)$$
$$D\mapsto {X_D}^\bot$$
is an order-preserving bijection. Indeed, given
$L\in\mathcal{L}(X_C)$, set
$$A(L)\doteq \big\{\lambda\in X, \lambda|_L=0\big\}.$$
Then $\psi^{-1}(L)$ is the connected component containing $C$ of $H_{A(L)}$.
\end{re}

\begin{lem}
$$nbc(X_C)=(-1)^{n-dim(C)}\mu(T_C,C).$$
\end{lem}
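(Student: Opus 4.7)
The plan is to reduce the identity to the analogous statement for hyperplane arrangements via the poset isomorphism $\psi$ of the preceding remark, and then apply the relation $(-1)^n T_X(1-q,0)=\chi(q)$ to the arrangement $X_C$.

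First I would pin down the endpoints of the interval $[T_C,C]$ of $\mathcal{C}(X,C)$ under $\psi$. The maximum $C$ is the connected component of $H_{X_C}$ containing itself, so $\psi(C)=X_C^\bot$, which is the zero subspace in $V_C=V/X_C^\bot$. The minimum $T_C$ is a connected component of $T$ and, since $\underline{0}\notin X$, no hypersurface contains it, so $X_{T_C}=\emptyset$ and $\psi(T_C)=V_C$. Because $\psi$ is an order isomorphism of finite posets, it preserves the Möbius function on intervals, giving
\[
\mu_{\mathcal{C}(X)}(T_C,C)=\mu_{\mathcal{L}(X_C)}(V_C,\{0\}).
\]

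Next I would invoke the characteristic-polynomial theorem for the central hyperplane arrangement $X_C$ inside $V_C$. Since $X_C$ spans $V_C^{\ast}$ by construction, $\dim V_C=r(X_C)=n-\dim(C)$, so that theorem gives
\[
(-1)^{n-\dim(C)}\,T_{X_C}(1-q,0)=\chi_{X_C}(q).
\]
Setting $q=0$, the left-hand side becomes $(-1)^{n-\dim(C)}\,T_{X_C}(1,0)=(-1)^{n-\dim(C)}\,nbc(X_C)$ by formula $(3)$. On the right-hand side, only subspaces $L\in\mathcal{L}(X_C)$ with $\dim L=0$ survive; since $X_C$ spans $V_C^{\ast}$, the unique such subspace is $\{0\}$, and therefore $\chi_{X_C}(0)=\mu_{\mathcal{L}(X_C)}(V_C,\{0\})$.

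Combining these two identities yields $(-1)^{n-\dim(C)}\,nbc(X_C)=\mu(T_C,C)$, which is the claim after multiplying by $(-1)^{n-\dim(C)}$. The only substantive point to verify carefully is the first step — that $\psi$ really identifies $T_C$ with $V_C$ and $C$ with $\{0\}$, so that the Möbius-function identification is legitimate; everything after that is a direct application of results already established in the paper.
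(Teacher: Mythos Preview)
Your proposal is correct and follows essentially the same route as the paper's own proof: transfer $\mu(T_C,C)$ through the poset isomorphism $\psi$ to $\mu_{\mathcal{L}(X_C)}(V_C,\{0\})=\chi_{\mathcal{L}(X_C)}(0)$, then apply Theorem~4.3 and formula~(3) to the hyperplane arrangement $X_C$ in the $(n-\dim C)$-dimensional space $V_C$. The only difference is that you spell out explicitly why $\psi$ sends $T_C$ to $V_C$ and $C$ to $\{0\}$, which the paper leaves implicit in the phrase ``since $X_C^\bot$ is the origin in $V_C$, and hence the only element of rank~0.''
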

\begin{proof}
By the previous remark,
$$\mu(T_C,C)\doteq\mu_{\mathcal{C}(X)}(T_C,C)=\mu_{\mathcal{C}(X,C)}(T_C,C)=\mu_{\mathcal{L}(X_C)}(V_C,{X_C}^\bot)=\chi_{\mathcal{L}(X_C)}(0)$$
since ${X_C}^\bot$ is the origin in $V_C$, and hence the only
element of rank 0. Thus by Theorem 4.3 and Formula (3),
$$\chi_{\mathcal{L}(X_C)}(0)=(-1)^{n-dim(C)}T_{X_C}(1,0)=(-1)^{n-dim(C)}nbc(X_C).$$
\end{proof}

Let $T_1,\dots,T_h$ be the connected components of $T$. We
denote by $\mathcal{C}(X)_i$ the set of layers that are
contained in $T_i$. This clearly gives a partition of the
layers:
$$\mathcal{C}(X)_=\bigsqcup_{i=1}^{h} \mathcal{C}(X)_i.$$

We now give some formulae for the Poincar\'{e} polynomial $P(q)$
and the Euler characteristic of $\rx$. We start from a
restatement of a result proved in \cite[Theor. 4.2]{DPt} (see
also \cite[14.1.5]{li}).  In this paper is considered an
arrangement of hypersurfaces in a torus, in which every
hypersurface is obtained by translating by an element of the torus
the kernel of a character. It is clear that the restriction of
the arrangement $\mathcal{T}(X)$ on every $T_i$ is an
arrangement of this kind. Then the cohomology of $\rx\cap T_i$
can be expressed as a direct sum of contributions given by the
layers of this arrangement, which are the elements of
$\mathcal{C}(X)_i$. In terms of the Poincar\'{e} polynomial
$P_i(q)$ of $\rx\cap T_i$, this expression is:
$$P_i(q)=\sum_{C\in\mathcal{C}(X)_i} nbc(X_C)
(q+1)^{dim(C)}q^{n-dim(C)}.$$
Thus the Poincar\'{e} polynomial of $\rx=\bigsqcup_i (\rx\cap T_i)$ is just the sum of these polynomials:

\begin{te}
$$P(q)=\sum_{C\in\mathcal{C}(X)} nbc(X_C) (q+1)^{dim(C)}q^{n-dim(C)}.$$
\end{te}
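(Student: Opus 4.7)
The plan is to reduce immediately to the connected case of De Concini--Procesi (\cite{DPt}) and then assemble the components. The key point is that the decomposition $T_\Gamma \simeq T_\Lambda \times \Gamma_t^*$ expresses $T$ as a disjoint union of translates $T_1,\dots,T_h$ of the identity torus $T_1$, indexed by the finite group $\Gamma_t^*$. For each $i$, the restriction of every hypersurface $H_\chi$ to $T_i$ is either empty or a translate (by some element of the torus $T_1$) of the kernel of a character of $T_1$. Hence the induced arrangement on each $T_i$ is a toric arrangement in precisely the sense of \cite{DPt}: hypersurfaces obtained by translating character-kernels in a torus.

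Next, I would invoke \cite[Theorem 4.2]{DPt} (equivalently \cite[14.1.5]{li}), applied to $\rx \cap T_i$, which gives a direct sum decomposition of $H^*(\rx \cap T_i)$ indexed by the layers of the induced arrangement, i.e.\ by the elements of $\mathcal{C}(X)_i$. Reading off Poincar\'e polynomials, the contribution of a layer $C\in\mathcal{C}(X)_i$ is $nbc(X_C)(q+1)^{\dim(C)}q^{n-\dim(C)}$, yielding the formula for $P_i(q)$ displayed just before the theorem. Here one must check that the combinatorial factor produced by the cited theorem, expressed in \cite{DPt} in terms of a local ``no-broken circuit'' count at $C$, agrees with $nbc(X_C)$ as defined in the present paper. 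This is immediate because $X_C = \{\chi \in X : H_\chi \supseteq C\}$ depends only on which characters cut out $C$, not on which component $T_i$ contains $C$, and the local arrangement around $C$ is combinatorially the central hyperplane arrangement $\mathcal{H}(X_C)$, whose Tutte polynomial evaluated at $(1,0)$ gives $nbc(X_C)$ by formula~(3).

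Finally, since $\rx = \bigsqcup_{i=1}^h (\rx \cap T_i)$ is a disjoint union of open subsets, its singular cohomology splits as the direct sum of the cohomologies of the pieces, so $P(q) = \sum_{i=1}^h P_i(q)$. Using the partition $\mathcal{C}(X) = \bigsqcup_i \mathcal{C}(X)_i$ noted in the text, this sum collapses to
\[
P(q) = \sum_{i=1}^h \sum_{C\in\mathcal{C}(X)_i} nbc(X_C)(q+1)^{\dim(C)}q^{n-\dim(C)} = \sum_{C\in\mathcal{C}(X)} nbc(X_C)(q+1)^{\dim(C)}q^{n-\dim(C)},
\]
which is the claimed identity.

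The main obstacle is purely bookkeeping: all the real geometric content (the topological description of the complement of an arrangement of translated character-kernels on a single torus) is imported from \cite{DPt}, so the work of the proof lies entirely in checking that the hypotheses of that theorem apply componentwise and that the local data $nbc(X_C)$ is intrinsic to $C$. Once those compatibilities are verified, the statement is a one-line assembly from the cited result.
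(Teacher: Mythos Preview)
Your proposal is correct and follows essentially the same approach as the paper: the paper presents this theorem as a restatement of \cite[Theorem~4.2]{DPt}, observing that the restriction of $\mathcal{T}(X)$ to each connected component $T_i$ is an arrangement of translated character-kernels, applying the cited result to obtain $P_i(q)=\sum_{C\in\mathcal{C}(X)_i} nbc(X_C)(q+1)^{\dim(C)}q^{n-\dim(C)}$, and then summing over the components. Your write-up adds a bit more explicit justification (the disjoint-union argument for cohomology and the intrinsic nature of $nbc(X_C)$), but the strategy and ingredients are identical.
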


Now we prove:

\begin{te}\label{MPo}
$$P(q)=q^n M_X\left(\frac{2q+1}{q}, 0\right).$$
\end{te}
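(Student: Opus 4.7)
The plan is to combine the previous theorem (expressing $P(q)$ as a sum over layers indexed by $nbc(X_C)$) with Theorem \ref{Mch} (relating $M_X(\cdot,0)$ to the characteristic polynomial) and then perform a direct algebraic substitution. The only real work is a sign computation; there is no combinatorial obstacle.

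First I would rewrite $M_X(x,0)$ in a form that matches the shape of $P(q)$. Starting from Theorem \ref{Mch}, namely $(-1)^n M_X(1-q,0) = \chi(q)$, I substitute $q \mapsto 1-x$ to obtain $M_X(x,0) = (-1)^n \chi(1-x)$. Expanding $\chi$ via its definition and invoking Lemma 4.8, which says $\mu(T_C,C) = (-1)^{n-\dim(C)}\, nbc(X_C)$, gives
\[
M_X(x,0) = (-1)^n \sum_{C \in \mathcal{C}(X)} (-1)^{n-\dim(C)}\, nbc(X_C) (1-x)^{\dim(C)} = \sum_{C \in \mathcal{C}(X)} nbc(X_C) (x-1)^{\dim(C)},
\]
where in the last step $(-1)^{2n - \dim(C)} (1-x)^{\dim(C)} = (x-1)^{\dim(C)}$.

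Next I perform the substitution $x = (2q+1)/q$, noting the crucial identity $x - 1 = (q+1)/q$. This gives
\[
M_X\!\left(\tfrac{2q+1}{q},\,0\right) = \sum_{C \in \mathcal{C}(X)} nbc(X_C)\, \frac{(q+1)^{\dim(C)}}{q^{\dim(C)}}.
\]
Multiplying both sides by $q^n$ absorbs the denominator and redistributes the factor of $q$ exactly as needed:
\[
q^n M_X\!\left(\tfrac{2q+1}{q},\,0\right) = \sum_{C \in \mathcal{C}(X)} nbc(X_C)\, (q+1)^{\dim(C)} q^{n-\dim(C)}.
\]
By the preceding theorem, the right-hand side is precisely $P(q)$, completing the proof.

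The main thing to be careful about is the sign bookkeeping in passing from $\mu(T_C,C)$ to $nbc(X_C)$ and the exponent in $(1-x)^{\dim(C)}$ versus $(x-1)^{\dim(C)}$; once that is handled, the substitution $x - 1 = (q+1)/q$ is what makes the formula work cleanly, and the $q^n$ factor is dictated by the need to clear denominators uniformly across all layers $C$ (regardless of $\dim(C)$).
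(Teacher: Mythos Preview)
Your proof is correct and essentially the same as the paper's: both combine the previous theorem expressing $P(q)$ as a sum over layers with the $nbc$ identity (Lemma~4.9, not 4.8) and the link between $M_X(\cdot,0)$ and the M\"obius function. The only difference is cosmetic---you invoke Theorem~\ref{Mch} directly to rewrite $M_X(x,0)$, whereas the paper expands $M_X$ from its definition and cites Formula~(4) from inside the proof of Theorem~\ref{Mch}.
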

\begin{proof}
By definition, we have that
$$q^n M_X\left(\frac{2q+1}{q}, 0\right)=\sum_{A\subseteq X} m(A) (q+1)^{n-r(A)}q^{r(A)} (-1)^{|A|-r(A)}.$$
We compare this formula with the one in the previous Theorem.
We have to prove that for every $k=0,\dots,n$ the coefficient
of $(q+1)^{k}q^{n-k}$ is the same in the two expressions. In
fact by applying Formula (4) and then Lemma 4.9 we get the
claim:
$$(-1)^{n-k}\sum_{A\subseteq X, r(A)=n-k} m(A)(-1)^{|A|}=(-1)^{n-k}\sum_{C\in \mathcal{C}(X), dim(C)=k}\mu(T_C,C)=$$
$$=\sum_{C\in \mathcal{C}(X), dim(C)=k} nbc(X_C).$$
\end{proof}
 Therefore, by comparing Theorem \ref{Mch} and Theorem \ref{MPo}, we get the following formula, which relates the combinatorics of $\mathcal{C}(X)$ with the topology of $\rx$, and is the "toric" analogue of Theorem 4.1.
\begin{co}
$$P(q)={(-q)}^n \chi\left(-\frac{q+1}{q}\right).$$
\end{co}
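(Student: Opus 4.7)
The plan is to simply combine Theorem \ref{Mch} and Theorem \ref{MPo} via a direct algebraic substitution; no new combinatorial input is needed, since both hard ingredients are already established.

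First I would rewrite Theorem \ref{Mch} as an identity for $M_X(x,0)$: setting $x = 1-q$ (equivalently $q = 1-x$), the relation $(-1)^n M_X(1-q,0) = \chi(q)$ becomes
\begin{equation*}
M_X(x,0) = (-1)^n \chi(1-x).
\end{equation*}
This rearrangement is valid because it is a polynomial identity in a single indeterminate.

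Next I would specialize this identity at $x = \frac{2q+1}{q}$. A short computation gives
\begin{equation*}
1 - \frac{2q+1}{q} = \frac{q - (2q+1)}{q} = -\frac{q+1}{q},
\end{equation*}
so
\begin{equation*}
M_X\!\left(\frac{2q+1}{q},\, 0\right) = (-1)^n \chi\!\left(-\frac{q+1}{q}\right).
\end{equation*}

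Finally, I would plug this into Theorem \ref{MPo}:
\begin{equation*}
P(q) = q^n M_X\!\left(\frac{2q+1}{q},\, 0\right) = q^n (-1)^n \chi\!\left(-\frac{q+1}{q}\right) = (-q)^n \chi\!\left(-\frac{q+1}{q}\right),
\end{equation*}
which is the desired identity. The only subtlety worth flagging is that Theorem \ref{Mch} is stated for polynomial values of $q$ while here we evaluate $\chi$ at the rational function $-\frac{q+1}{q}$; this is legitimate because $M_X(x,0)$ and $\chi(q)$ are polynomials in one variable and the identity $M_X(x,0) = (-1)^n \chi(1-x)$ therefore holds after substituting any element of the fraction field $\mathbb{Q}(q)$ for $x$. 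There is no real obstacle: the corollary is a bookkeeping consequence of the two preceding theorems.
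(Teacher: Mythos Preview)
Your proof is correct and follows exactly the approach indicated in the paper, which simply states that the corollary is obtained ``by comparing Theorem \ref{Mch} and Theorem \ref{MPo}.'' Your explicit substitution $x=\frac{2q+1}{q}$ into the identity $M_X(x,0)=(-1)^n\chi(1-x)$ is precisely the computation the paper leaves to the reader.
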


We recall that the \emph{Euler characteristic} of a space can
be defined as the evaluation at $-1$ of its Poincar\'{e}
polynomial. Hence by Theorem \ref{MPo} we have:
\begin{co}\label{MEu}
$(-1)^n M_X(1,0)$ is equal to the Euler characteristic of $\rx$.
\end{co}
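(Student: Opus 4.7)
The plan is to deduce this immediately from Theorem \ref{MPo} by evaluating the Poincar\'{e} polynomial at $q=-1$. Recall that if a space has finitely generated cohomology with nonzero Betti numbers in only finitely many degrees (which holds for $\rx$, a smooth complex variety whose cohomology is described combinatorially via the layers in $\mathcal{C}(X)$), then its Euler characteristic is obtained by summing the Betti numbers with alternating signs, i.e.\ by evaluating its Poincar\'{e} polynomial at $q=-1$.

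Thus the strategy is simply to substitute $q=-1$ into the identity
$$P(q)=q^n M_X\!\left(\frac{2q+1}{q},0\right)$$
from Theorem \ref{MPo}. We observe that $\frac{2(-1)+1}{-1}=\frac{-1}{-1}=1$, so the right-hand side becomes $(-1)^n M_X(1,0)$, while the left-hand side becomes $P(-1)$, which equals the Euler characteristic of $\rx$ by the remark above.

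There is essentially no obstacle: the only thing one needs to check is that the rational expression $\frac{2q+1}{q}$ specializes correctly at $q=-1$, which it does since $-1\neq 0$, and that $M_X(x,y)$ is a polynomial so the substitution $x=1$, $y=0$ is unambiguous. (Both facts are evident from the definition in Section~2.) Hence the corollary follows in one line from Theorem \ref{MPo}.
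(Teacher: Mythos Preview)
Your proof is correct and is essentially the same as the paper's: the paper simply recalls that the Euler characteristic equals $P(-1)$ and then specializes Theorem \ref{MPo} at $q=-1$. Your added checks (well-definedness of the substitution) are fine but not needed beyond what the paper does.
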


\begin{ex}
In the case described in Example \ref{exa}, Theorem \ref{MPo} (or
Corollary 4.12) implies that
$$P(q)=15q^2+8q+1$$
and hence the Euler characteristic is $$P(-1)=8=M_X(1,0).$$
\end{ex}

\subsection{Number of regions of the compact torus}
In this section we consider the compact abelian group dual to $\Gamma$
$$\overline{T}\doteq {Hom(\Gamma,\mathbb{S}^1)}$$
where we set
$$\mathbb{S}^1\doteq \{z\in \mathbb{C}\:|\: |z|=1\}\simeq \mathbb{R}/\mathbb{Z}.$$

We assume for simplicity $\Gamma$ to be a lattice: then $\overline{T}$ is a
 \emph{compact torus}, i.e. it is isomorphic to
$(\mathbb{S}^1)^n$, and every $\chi\in X$ defines a hypersurface in $\overline{T}$:
$$\overline{H_\chi}\doteq \left\{ t\in \overline{T} |\chi(t)=1 \right\}.$$
We denote by  $\overline{\mathcal{T}(X)}$ this arrangement. Clearly its poset of layers is the same as for the arrangement  $\mathcal{T}(X)$ defined in the complex torus $T$.
We denote by $\overline{\rx}$ the complement
$$\overline{\rx}\doteq \overline{T}\setminus\bigcup_{\chi\in X}\overline{H_\chi}.$$
The compact toric arrangement $\overline{\mathcal{T}(X)}$ has been studied in \cite{ERS}; in particular the number $R(X)$ of \emph{region}s (i.e. of connected components) of $\overline{\rx}$ is proved to be a specialization of the characteristic polynomial $\chi(q)$:
\begin{te}
$$R(X)=(-1)^n \chi(0).$$
\end{te}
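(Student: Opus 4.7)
The plan is to reformulate the statement in terms of $M_X$ and then prove it through a compactly supported Euler characteristic computation. By Theorem \ref{Mch}, $(-1)^n\chi(0) = M_X(1,0)$, and expanding the definition of $M_X(1,0)$ only terms with $r(A)=n$ survive, yielding
$$M_X(1,0) = (-1)^n \sum_{A\subseteq X,\; r(A)=n} (-1)^{|A|}\, m(A).$$
So it suffices to show $R(X)=M_X(1,0)$.

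First I would establish the topological fact that each connected component of $\overline{\mathcal{R}_X}$ is homeomorphic to a bounded open convex polytope in $\mathbb{R}^n$, hence contractible with compactly supported Euler characteristic equal to $(-1)^n$. The argument is via the universal cover $p\colon (\Lambda\otimes\mathbb{R})^*\to\overline{T}$: the preimage of the arrangement is the periodic affine hyperplane arrangement with hyperplanes $\{f_\chi = k\}$ for $\chi\in X,\, k\in\mathbb{Z}$. This arrangement is essential because the normals $\{f_\chi : \chi \in X\}$ span $(\Lambda\otimes\mathbb{R})^*$ (since $X$ generates a finite-index sublattice of $\Lambda$), so each of its chambers is a bounded open convex polytope $\widetilde{U}$. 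The restriction $p|_{\widetilde{U}}$ is injective: if $\widetilde{U}$ met a nontrivial translate $\widetilde{U}+v$ with $v\in\Lambda^*\setminus\{0\}$, then since distinct chambers are disjoint we would have $\widetilde{U}=\widetilde{U}+v$, contradicting boundedness.

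The main computation then uses additivity of $\chi_c$. Since $\overline{T}$ is compact of positive dimension, $\chi_c(\overline{T})=0$, and the decomposition $\overline{T}=\overline{\mathcal{R}_X}\sqcup\bigcup_{\chi\in X}\overline{H_\chi}$ gives
$$(-1)^n R(X) = \chi_c(\overline{\mathcal{R}_X}) = -\,\chi_c\!\left(\bigcup_{\chi\in X}\overline{H_\chi}\right).$$
Inclusion-exclusion over the closed submanifolds $\overline{H_\chi}$ yields
$$\chi_c\!\left(\bigcup_{\chi\in X}\overline{H_\chi}\right) = \sum_{\emptyset\neq A\subseteq X}(-1)^{|A|+1}\,\chi_c(\overline{H_A}).$$
By the compact analogue of Lemma \ref{mAc}, $\overline{H_A}$ is a disjoint union of $m(A)$ compact subtori of dimension $n-r(A)$, so $\chi_c(\overline{H_A})=m(A)$ when $r(A)=n$ and vanishes otherwise. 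Collecting terms,
$$(-1)^n R(X) = \sum_{A\subseteq X,\; r(A)=n}(-1)^{|A|}\,m(A),$$
which matches $M_X(1,0)$, as required.

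The main obstacle is the topological input that every region is contractible; without the fact that their $\chi_c$ all equal $(-1)^n$, the counting would fail. Everything else is standard additivity of $\chi_c$, inclusion-exclusion, and the combinatorial data $m(A),\,r(A)$ already encoded in $M_X$.
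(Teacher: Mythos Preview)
Your argument is correct. Note, however, that the paper does not prove this theorem: it is quoted from \cite{ERS}, and the paper's own contribution is the subsequent Corollary deducing $R(X)=M_X(1,0)$ from it via Theorem~\ref{Mch}. You run the logic in the opposite direction: you establish $R(X)=M_X(1,0)$ directly by a compactly supported Euler characteristic count, and then invoke Theorem~\ref{Mch} to recover $(-1)^n\chi(0)=M_X(1,0)=R(X)$.

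A brief comparison: the route in \cite{ERS} is in the spirit of Zaslavsky's face-counting for affine arrangements, working with the M\"obius function of the poset of layers to obtain the region count as $(-1)^n\chi(0)$. Your approach bypasses the poset entirely: the lift to the universal cover shows each region is an open $n$-cell (hence contributes $(-1)^n$ to $\chi_c$), and inclusion--exclusion on the closed subgroups $\overline{H_A}$ kills all terms with $r(A)<n$ because a positive-dimensional compact torus has Euler characteristic zero, leaving exactly the sum $\sum_{r(A)=n}(-1)^{|A|}m(A)$. This is a clean and self-contained alternative; the only point worth making explicit is the compact analogue of Lemma~\ref{mAc}, which you assert but which follows immediately from the identification $\overline{H_A}\cong\mathrm{Hom}(\Lambda/\langle A\rangle_{\mathbb{Z}},\mathbb{S}^1)$ and the structure of $\Lambda/\langle A\rangle_{\mathbb{Z}}$ as $\mathbb{Z}^{n-r(A)}$ times a finite group of order $m(A)$.
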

By comparing this result with Theorem \ref{Mch} we get the following
\begin{co}
$$R(X)= M_X(1,0)$$
\end{co}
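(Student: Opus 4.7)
The plan is to combine the cited theorem $R(X)=(-1)^n\chi(0)$ from \cite{ERS} with the author's own Theorem \ref{Mch}, which states that $(-1)^n M_X(1-q,0)=\chi(q)$ as polynomials in $q$. Since this is an identity of polynomials, I may specialize it at any value of $q$; the obvious choice is $q=0$.

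Substituting $q=0$ into Theorem \ref{Mch} gives
\begin{equation*}
(-1)^n M_X(1,0)=\chi(0).
\end{equation*}
Multiplying both sides by $(-1)^n$ yields $M_X(1,0)=(-1)^n\chi(0)$. By the theorem of Ehrenborg--Readdy--Slone cited above, the right-hand side equals $R(X)$, so $M_X(1,0)=R(X)$, which is exactly the claim.

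There is essentially no obstacle: both ingredients have been established earlier in the paper (or cited), and the corollary is a one-line specialization. The only thing to double-check is that the sign conventions and normalizations in Theorem \ref{Mch} and in the \cite{ERS} result are compatible --- in particular that $\chi(q)$ is defined the same way in both places (as $\sum_C \mu(T_C,C)q^{\dim C}$) and that $n=\mathrm{rank}(\Lambda)=\dim_{\mathbb{C}} T$ is used consistently. Once this is verified, the proof is immediate and deserves no more than the two-line argument above.
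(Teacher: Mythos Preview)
Your proof is correct and matches the paper's approach exactly: the corollary is stated immediately after the theorem $R(X)=(-1)^n\chi(0)$ with the comment ``By comparing this result with Theorem \ref{Mch} we get the following,'' which is precisely the specialization at $q=0$ you carry out. The caveat about consistency of conventions is sensible but not an issue here, since the paper uses its own definition of $\chi(q)$ throughout.
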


\begin{ex}
In the case of Example \ref{exa}, we can represent in the real plane with coordinates $(x,y)$ the compact torus $\overline{T}$ as the square $[0,1]\times [0,1]$ with the opposite edges identified. Then the arrangement $\overline{\mathcal{T}(X)}$ is given by the lines
$$x=0,\: x=1/2,\: y=0,\: y=1/2,\: x=-y,\: x=y.$$
These lines divide the torus in $8=\chi(0)$ regions:

\includegraphics[width=50mm]{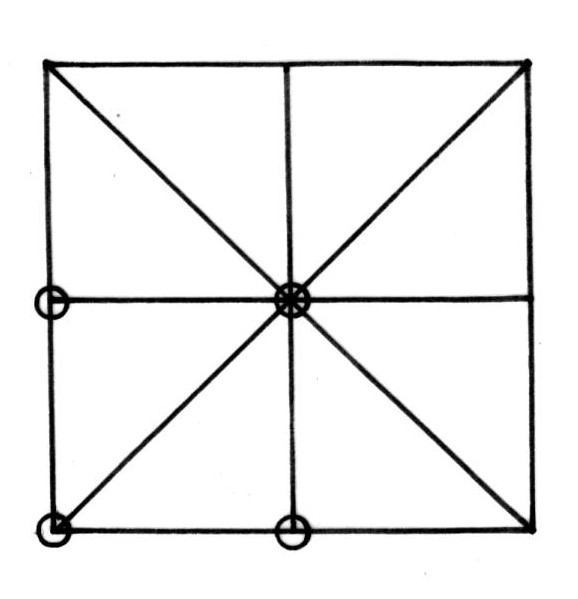}

\end{ex}

\section{Dahmen-Micchelli spaces}

So far we considered evaluations of $T_X(x,y)$
and $M_X(x,y)$ at $y=0$ and $y=1$. However,
there is another remarkable specialization of the Tutte
polynomial: $T_X(1,y)$, which (by Theorem \ref{Cra}) is called the \emph{polynomial of
the external activity} of $X$. It is related with to the
corresponding specialization of $M_X(x,y)$ in a simple way:
\begin{lem}\label{aes}
$$M_X(1,y)=\sum_{p\in \mathcal{C}_0(X)} T_{X_p}(1,y).$$
\end{lem}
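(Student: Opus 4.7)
The plan is to expand both sides as sums indexed by subsets $A \subseteq X$ of full rank, then match them via the counting of layers provided by Lemma \ref{mAc}.

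First I would substitute $x = 1$ into the definition of $M_X(x,y)$ and observe that the factor $(x-1)^{r(X)-r(A)}$ kills every term unless $r(A) = r(X) = n$. Hence
$$M_X(1,y) = \sum_{\substack{A \subseteq X \\ r(A) = n}} m(A) \, (y-1)^{|A|-n}.$$
Next I would analyze each summand on the right side. For a point $p \in \mathcal{C}_0(X)$ the set $X_p$ of characters vanishing at $p$ has rank $n$, since $p$ is $0$-dimensional. Applying the definition of the ordinary Tutte polynomial at $x = 1$ gives
$$T_{X_p}(1,y) = \sum_{\substack{A \subseteq X_p \\ r(A) = n}} (y-1)^{|A|-n}.$$

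Then I would interchange the two sums on the right side of the claimed identity:
$$\sum_{p \in \mathcal{C}_0(X)} T_{X_p}(1,y) = \sum_{\substack{A \subseteq X \\ r(A) = n}} (y-1)^{|A|-n} \cdot \bigl|\{ p \in \mathcal{C}_0(X) : A \subseteq X_p \}\bigr|.$$
The condition $A \subseteq X_p$ is equivalent to $p \in H_A$, and since $r(A) = n$ the subvariety $H_A$ is $0$-dimensional, so its connected components are exactly the points $p \in \mathcal{C}_0(X)$ lying in $H_A$. By Lemma \ref{mAc} the number of such components is $m(A)$.

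Substituting this count into the interchanged sum reproduces the formula for $M_X(1,y)$ obtained in the first step, which finishes the proof. The only nontrivial ingredient is the identification of the cardinality of $\{p : A \subseteq X_p\}$ with $m(A)$, and since this is precisely Lemma \ref{mAc} applied to subsets of full rank, no real obstacle arises; the argument is essentially a double-counting exercise.
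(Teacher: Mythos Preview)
Your proof is correct and follows essentially the same route as the paper's own argument: expand both sides over full-rank subsets $A\subseteq X$, then invoke Lemma~\ref{mAc} to identify $m(A)$ with the number of points $p\in\mathcal{C}_0(X)$ for which $A\subseteq X_p$. The only difference is that you spell out the sum-interchange and the equivalence $A\subseteq X_p \Leftrightarrow p\in H_A$ a bit more explicitly than the paper does.
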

\begin{proof}
By definition
$$M_X(1,y)=\sum _{A\subseteq X, r(A)=n} m(A)(y-1)^{|A|-n}$$
and
$$T_{X_p}(1,y)=\sum _{A\subseteq X_p, r(A)=n}(y-1)^{|A|-n}.$$
But by Lemma \ref{mAc}
$$m(A)=|\{p\in\mathcal{C}_0(X)| A\subseteq X_p\}|$$ which
is the number of polynomials $T_{X_p}$ in which the summand
$(y-1)^{|A|-n}$ appears.
\end{proof}

The previous lemma has an interesting consequence.
Following \cite{BH}, \cite{DM2}, and \cite{DM}, we associate
to every finite set $X\subset V$ a space $D(X)$
of functions $V\rightarrow \mathbb{C}$, and to every finite set
$X\subset \Lambda$ a space $DM(X)$ of functions
$\Lambda\rightarrow \mathbb{C}$. Such spaces are defined as the
solutions to a system, of differential equations
and of difference equations respectively, in the following way.

For every $\lambda \in X$, let $\partial_\lambda$ be  the usual directional
derivative
$$\partial_\lambda f(x)\doteq \frac{\partial f}{\partial \lambda}(x)$$
and let $\nabla_\lambda$ be the difference operator
$$\nabla_\lambda f(x)\doteq f(x)-f(x-\lambda).$$

For every $A\subset X$, we define the differential operator
$$\partial_A\doteq\prod_{\lambda\in A} \partial_\lambda$$
and the difference operator
$$\nabla_A \doteq\prod_{\lambda\in A} \nabla_\lambda.$$

We can now define the \emph{differentiable
Dahmen-Micchelli space}
$$D(X)\doteq\left\{f:V\rightarrow \mathbb{C} \:|\: \partial_A f=0\:\forall A\mbox{ such that } r(X\setminus A)<n\right\}$$
and the \emph{discrete Dahmen-Micchelli space}
$$DM(X)\doteq\left\{f:\Lambda\rightarrow \mathbb{C} \:|\: \nabla_A f =0\:\forall A\mbox{ such that } r(X\setminus A)<n\right\}.$$

The space $D(X)$ is a space of polynomials, which was introduced in order to study the \emph{box spline}. This is a piecewise-polynomial function studied in Approximation Theory; its local pieces, together with their derivatives, span $D(X)$. On the other hand, $DM(X)$ is a space of quasipolynomials which arises in the study of the \emph{partition function}. 
We recall that a function $f$ on a lattice $\Lambda$ is a \emph{quasipolynomial} if there exists a sublattice $\Lambda_0$  of finite index such that $f$ restricted to every coset coincides with a polynomial.
The partition function is the function that counts how many ways an element of $\Lambda$ can be written as a linear combination with nonnegative integer coefficients of elements of $X$. This function is a piecewise-quasipolynomial, and its local pieces, together with their translates, span $DM(X)$.

In the recent book \cite{li} the spaces $D(X)$ and $DM(X)$ are shown to be deeply related with the hyperlane arrangement and with the toric arrangement defined by $X$ respectively.

In order to compare these two spaces, we consider the elements
of $D(X)$ as functions $\Lambda\rightarrow \mathbb{C}$ by
restricting them to the lattice $\Lambda$. Since the elements
of $DM(X)$ are polynomial functions, they are determined by
their restriction. For every $p\in\mathcal{C}_0(X)$, let us
define the following map:
$$\varphi_p:\Lambda\rightarrow \mathbb{C}$$
$$\lambda\mapsto\lambda(p).$$
(see Section 2.4.2). In \cite{DM} (see also \cite[Formula
16.1]{li}) the following result is proved.

\begin{te}
$$DM(X)=\bigoplus_{p\in\mathcal{C}_0(X)}\varphi_p D(X_p).$$
\end{te}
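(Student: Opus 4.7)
The plan is to establish the decomposition $DM(X)=\bigoplus_{p\in\mathcal{C}_0(X)}\varphi_p D(X_p)$ in three stages: (a) verify the inclusion $\bigoplus_p\varphi_p D(X_p)\subseteq DM(X)$; (b) check that the sum is direct; and (c) match dimensions. Stages (a) and (c) carry the real content, while (b) is essentially automatic.

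For stage (a), the core computation is to conjugate a difference operator by a character. For any polynomial $g$ on $\Lambda$ and any $\lambda\in X$,
$$\nabla_\lambda(\varphi_p g)(\mu)=\varphi_p(\mu)g(\mu)-\varphi_p(\mu-\lambda)g(\mu-\lambda)=\varphi_p(\mu)\bigl[g(\mu)-\lambda(p)^{-1}g(\mu-\lambda)\bigr],$$
so $\nabla_\lambda(\varphi_p g)=\varphi_p\cdot T_{\lambda,p}g$ with $T_{\lambda,p}=(1-\lambda(p)^{-1})I+\lambda(p)^{-1}\nabla_\lambda$. When $\lambda\in X_p$ (i.e.\ $\lambda(p)=1$) this is just $\nabla_\lambda$; when $\lambda\notin X_p$, the scalar $1-\lambda(p)^{-1}$ is nonzero and $\nabla_\lambda$ is nilpotent on every space of polynomials of bounded degree, so $T_{\lambda,p}$ is invertible there. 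Iterating gives $\nabla_A(\varphi_p f)=\varphi_p\cdot T_{A,p}f$, and discarding the invertible factors indexed by $\lambda\in A\setminus X_p$ reduces the required vanishing to $\nabla_B f=0$ for $B:=A\cap X_p$. Since $X_p\setminus B=X_p\setminus A\subseteq X\setminus A$, the hypothesis $r(X\setminus A)<n$ forces $r(X_p\setminus B)<n$, so $B$ is a cocircuit in $X_p$. It then remains to pass from the differential definition of $D(X_p)$ (which gives $\partial_B f=0$) to the difference statement $\nabla_B f=0$; this is the classical fact that $\partial_\lambda$ and $\nabla_\lambda$ share the same principal symbol, and a filtration by total degree shows that the polynomial kernels of the two systems coincide.

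For stage (b), a relation $\sum_p \varphi_p f_p=0$ on $\Lambda$ with polynomial $f_p$ and pairwise distinct characters $\varphi_p$ forces every $f_p=0$, by the standard linear independence of exponentials against polynomial coefficients. For stage (c), the classical Dahmen--Micchelli dimension formula gives $\dim D(X_p)=T_{X_p}(1,1)$, the number of bases of $X_p$. Summing and applying Lemma \ref{aes} at $y=1$,
$$\sum_{p\in\mathcal{C}_0(X)}\dim D(X_p)=\sum_p T_{X_p}(1,1)=M_X(1,1),$$
while $\dim DM(X)=M_X(1,1)$ follows from the classical Dahmen--Micchelli count of solutions (equivalently, from the zonotope volume, cf.\ Proposition \ref{voZ}). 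Combining the inclusion and directness from (a) and (b) with this equality of dimensions yields the claim.

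The main obstacle I expect is the symbol argument at the end of stage (a): the assertion that $D(X_p)$ can equivalently be cut out by the $\nabla_B$ (for $B$ a cocircuit) rather than the $\partial_B$ is not an identity of operators but a coincidence of polynomial kernels, and a careful graded/filtered argument is needed to establish it cleanly. The linear independence used in (b) and the classical dimension formulas invoked in (c) are well-established pieces of Dahmen--Micchelli theory that I would simply quote.
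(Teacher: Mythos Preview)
The paper does not prove this theorem: it is quoted verbatim from Dahmen--Micchelli \cite{DM} (see also \cite[Formula 16.1]{li}), with no argument given. So there is no ``paper's own proof'' to compare against; you are supplying a proof where the paper only supplies a citation.

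That said, your sketch is correct and is essentially the standard argument found in those references. A few remarks. In stage~(a), the passage from $\partial_B f=0$ to $\nabla_B f=0$ can be made cleaner than a filtration argument: on polynomials one has $\nabla_\lambda=(1-e^{-\partial_\lambda})=\partial_\lambda\cdot u_\lambda$ with $u_\lambda=1-\tfrac{1}{2}\partial_\lambda+\cdots$ an invertible operator, so $\nabla_B=\partial_B\cdot(\text{unit})$ and the two kernels coincide directly. In stage~(c) you should be explicit that there is no circularity: in the paper the equality $\dim DM(X)=\mathrm{vol}\,\mathcal{Z}(X)$ is \emph{recovered} as a corollary of the decomposition, but it is also proved independently in \cite[Chapter~13]{li}, and that independent proof is what you must invoke (together with Proposition~\ref{voZ}) to match dimensions. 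With those two clarifications, the three-stage argument (inclusion via the twisted-operator identity, directness via linear independence of characters against polynomial coefficients, and the Dahmen--Micchelli dimension formulas) is complete.
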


Since every $D(X_p)$ is defined by homogeneous differential equations, it is naturally
graded, the degree of every element being just its degree as a polynomial. The Hilbert series of $D(X_p)$ is known to be $T_{X_p}(1,y)$. In other words, the coefficients of this polynomial are the dimensions of the graded parts (see
\cite{BDR} or \cite[Theorem 11.8]{li}). Then, by the
theorem above, the space $DM(X)$ is also graded, and by Lemma \ref{aes}
we have:

\begin{te}\label{dDM}
$M_X(1,y)$ is the Hilbert series of $DM(X)$.
\end{te}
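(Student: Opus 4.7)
The plan is to combine three ingredients: the direct-sum decomposition of Theorem 5.2, the known Hilbert series of $D(X_p)$, and the summation identity of Lemma \ref{aes}.

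First I would transfer the grading from each $D(X_p)$ to the corresponding summand $\varphi_p D(X_p)$ of $DM(X)$. Since the defining differential equations $\partial_A f = 0$ are homogeneous, $D(X_p)$ is naturally graded by total polynomial degree. On the summand $\varphi_p D(X_p)$ I would set $\deg(\varphi_p f) \doteq \deg(f)$, which turns multiplication by the character function $\varphi_p$ into a graded isomorphism from $D(X_p)$ onto $\varphi_p D(X_p)$. Because Theorem 5.2 furnishes a direct-sum decomposition, every $g \in DM(X)$ admits a unique expression $g = \sum_p \varphi_p f_p$ with $f_p \in D(X_p)$; hence the grading assembles coherently on $DM(X)$ and Hilbert series are additive across summands:
$$H_{DM(X)}(y) = \sum_{p \in \mathcal{C}_0(X)} H_{D(X_p)}(y).$$

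Next I would invoke the classical fact (\cite{BDR}, or \cite[Theorem 11.8]{li}) that $H_{D(X_p)}(y) = T_{X_p}(1,y)$. Substituting and applying Lemma \ref{aes},
$$H_{DM(X)}(y) = \sum_{p \in \mathcal{C}_0(X)} T_{X_p}(1,y) = M_X(1,y),$$
which is the claimed identity.

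The only potentially delicate point is the coherence of the transported grading: one must check that the degree assignment $\deg(\varphi_p f) = \deg(f)$ is well defined across the different summands, i.e.\ that the $\varphi_p$ with varying $p$ do not produce unexpected relations among quasipolynomials of different degrees. This follows from the linear independence of the characters $\varphi_p$ on $\Lambda$, which is already built into the direct-sum assertion of Theorem 5.2. With this observation in place the theorem reduces to straightforward bookkeeping on top of the two deep inputs (Theorem 5.2 and the Hilbert series formula for $D(X_p)$) together with the purely combinatorial Lemma \ref{aes}.
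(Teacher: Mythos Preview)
Your proof is correct and follows essentially the same route as the paper: combine the direct-sum decomposition of Theorem~5.2 with the known identity $H_{D(X_p)}(y)=T_{X_p}(1,y)$ and then apply Lemma~\ref{aes}. You are somewhat more explicit than the paper about transporting the grading along the multiplication-by-$\varphi_p$ isomorphism and about why the grading is well defined on the direct sum, but the argument is the same.
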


By comparing this theorem with Proposition \ref{voZ} we recover the following known result, which can be found for example in (\cite[Chapter 13]{li}) :
\begin{co}
The dimension of $DM(X)$ is equal to the volume of the zonotope $\mathcal{Z}(X)$.
\end{co}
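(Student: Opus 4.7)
The plan is to combine the two results that have just been established: Theorem \ref{dDM}, which identifies $M_X(1,y)$ with the Hilbert series of the graded space $DM(X)$, and Proposition \ref{voZ}, which identifies $M_X(1,1)$ with the volume of $\mathcal{Z}(X)$.

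First, I would recall that, as noted in the paragraph preceding Theorem \ref{dDM}, the direct sum decomposition $DM(X)=\bigoplus_{p\in\mathcal{C}_0(X)}\varphi_p D(X_p)$ endows $DM(X)$ with a grading inherited from the polynomial grading on each $D(X_p)$. In particular, $DM(X)$ is finite-dimensional, each graded piece is finite-dimensional, and only finitely many of them are nonzero (since this is already the case for every $D(X_p)$, whose Hilbert series $T_{X_p}(1,y)$ is a polynomial).

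Next, I would use the elementary observation that for any graded vector space with finite-dimensional pieces and Hilbert series a polynomial $H(y)=\sum_k h_k y^k$, one has $\dim = \sum_k h_k = H(1)$. Applying this to $DM(X)$ with $H(y)=M_X(1,y)$ by Theorem \ref{dDM} gives $\dim DM(X)=M_X(1,1)$. Finally, Proposition \ref{voZ} identifies $M_X(1,1)$ with $\operatorname{vol}(\mathcal{Z}(X))$, and the corollary follows by chaining the two equalities.

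There is no real obstacle here: the two nontrivial ingredients (the Hilbert-series identification and the volume identification) have already been proved, and the corollary is essentially a specialization of the Hilbert series at $y=1$. The only point to be careful about is ensuring that the substitution $y=1$ makes sense, i.e., that $M_X(1,y)$ is genuinely a polynomial (so that evaluation at $1$ equals the total dimension); but this is immediate from the defining sum, whose terms involving $(y-1)^{|A|-r(A)}$ are polynomial in $y$.
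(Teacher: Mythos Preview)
Your proof is correct and follows exactly the paper's own approach: the corollary is obtained by comparing Theorem~\ref{dDM} (so that $\dim DM(X)=M_X(1,1)$) with Proposition~\ref{voZ} (so that $M_X(1,1)=\mathrm{vol}\,\mathcal{Z}(X)$). The paper states this in a single sentence, without spelling out the evaluation-at-$1$ detail you include, but the argument is the same.
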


\section{The case of root systems}
This section is devoted to describing a remarkable class of examples. We will assume standard notions about root
systems, Lie algebras and algebraic groups, which can be found
for example in \cite{HA} and \cite{Hu}.

Let $\Phi$ be a root system, $\langle\Phi^\vee\rangle$ be the
lattice spanned by the coroots, and $\Lambda$ be its dual
lattice (which is called the \emph{cocharacter} lattice).
Then we define, as in Section 4.2, a torus $T=T_\Lambda$ having $\Lambda$
as its group of characters. In other words, if $\mathfrak{g}$
is the semisimple complex Lie algebra associated to $\Phi$ and
$\mathfrak{h}$ is a Cartan subalgebra, $T$ is defined as the
quotient $T\doteq {\mathfrak{h}}/{\langle\Phi^\vee\rangle}$.

Each root $\alpha$ takes integer values on
$\langle\Phi^\vee\rangle$, so it induces a character
$$e^{\alpha}: T\rightarrow {\mathbb{C}}/{\mathbb{Z}}\simeq \mathbb{C^*}.$$
Let $X$ be the set of these
characters. More precisely, since $\alpha$ and $-\alpha$ define
the same hypersurface, we set
$$X\doteq \left\{e^{\alpha},\:\alpha\in \Phi^+\right\}.$$
In this way a toric
arrangement is associated to every root system.

\begin{re}

~

\begin{enumerate}
  \item Let $G$ be the semisimple, simply connected linear
      algebraic group associated to $\mathfrak{g}$. Then
      $T$ is the maximal torus of $G$ corresponding to
      $\mathfrak{h}$, and $\rx$ is known as the set of
      \emph{regular points} of $T$.
  \item One may take as $\Lambda$ the root lattice (or equivalently, replace the coroot lattice by the character lattice in the construction above). But then one obtains as $T$ a maximal
      torus of the semisimple \emph{adjoint} group $G^a$,
      which is the quotient of $G$ by its center.
\end{enumerate}
\end{re}

Toric arrangements defined by root systems have been studied in \cite{Mo}; we now show two applications to the present work.
Let $W$ be the (finite) Weyl group of $\Phi$, and let $\widetilde{W}$ be the associated affine Weyl group. We denote by $s_0,\ldots, s_n$ its generators, and by $W_k$ the subgroup of $\widetilde{W}$ generated by all the elements $s_i$ but $s_k$.  Let $\Phi_k\subset \Phi$ be the root system of $W_k$, and denote by $X_k$ the corresponding sublist of $X$. Then we have:
\begin{co}
$$M_X(1,y)=\sum_{k=0}^n \frac{|W|}{|W_k|} T_{X_k}(1,y).$$
\end{co}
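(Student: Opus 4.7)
The plan is to combine Lemma~\ref{aes} with the standard orbit structure of the affine Weyl group on alcove vertices. First I would apply Lemma~\ref{aes} to get
$$M_X(1,y) = \sum_{p \in \mathcal{C}_0(X)} T_{X_p}(1,y),$$
reducing the problem to partitioning $\mathcal{C}_0(X)$ into $W$-orbits, computing their sizes, and identifying the associated sublists $X_p$ with the $X_k$.

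Next I would lift to $\mathfrak{h}$: the points of $\mathcal{C}_0(X)$ are precisely the images in $T = \mathfrak{h}/\langle\Phi^\vee\rangle$ of the vertices of the affine Coxeter arrangement of $\Phi$, and $\widetilde{W}$ acts simply transitively on the alcoves. The fundamental alcove is a simplex with vertices $v_0,\ldots,v_n$, and a standard fact about affine Coxeter groups says that the $\widetilde{W}$-stabilizer of $v_k$ is exactly $W_k$. Consequently the set of all alcove vertices splits into $n+1$ orbits under $\widetilde{W}$, and using the semidirect-product decomposition $\widetilde{W} = W \ltimes \langle\Phi^\vee\rangle$ each such orbit projects to a single $W$-orbit $W\cdot\overline{v_k}$ in $T$; together these exhaust $\mathcal{C}_0(X)$.

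To compute $|W\cdot\overline{v_k}|$ I would show $\mathrm{Stab}_W(\overline{v_k}) \cong W_k$. The projection $\pi\colon \widetilde{W} \to W$ restricted to $W_k$ is injective (since $W_k$ is finite and the only translation it contains is trivial), and its image consists of those $w \in W$ with $w(v_k) - v_k \in \langle\Phi^\vee\rangle$, which is exactly $\mathrm{Stab}_W(\overline{v_k})$; hence $|W\cdot\overline{v_k}| = |W|/|W_k|$. Moreover, $X_{\overline{v_k}} = \{e^\alpha : \alpha \in \Phi^+,\ \alpha(v_k)\in\mathbb{Z}\}$, and the set of roots $\alpha\in\Phi$ with $\alpha(v_k)\in\mathbb{Z}$ is precisely $\Phi_k$ (the root system of $W_k$ inside $\Phi$), so $X_{\overline{v_k}} = X_k$. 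Since the matroid structure of $X_p$ is $W$-equivariant, $T_{X_p}(1,y)$ is constant on $W$-orbits, and grouping the sum above by orbit gives the corollary.

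The main obstacle is the semidirect-product bookkeeping needed to identify the image of $W_k$ under $\widetilde{W}\to W$ with the $W$-stabilizer of $\overline{v_k}$, and in parallel to check that the roots of $W_k$ are exactly those $\alpha\in\Phi$ taking integer values at $v_k$; once this is done, the statement reduces to an orbit-stabilizer count applied to Lemma~\ref{aes}.
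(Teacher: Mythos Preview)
Your proposal is correct and follows the same route as the paper: apply Lemma~\ref{aes} and then group the points $p\in\mathcal{C}_0(X)$ into $W$-orbits, identifying each orbit with a vertex $v_k$ of the fundamental alcove, its size with $|W|/|W_k|$, and the sublist $X_p$ with $X_k$. The only difference is that the paper outsources the entire orbit analysis to \cite[Theor.~1]{Mo}, whereas you sketch that classification from scratch via the affine Weyl group action on alcove vertices; this makes your argument more self-contained but not genuinely different in strategy.
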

\begin{proof}
Straightforward from \cite[Theor. 1]{Mo} and Lemma \ref{aes}.
\end{proof}

Furthermore, in \cite{Mo} the following theorem is proved. Let $W$ be the Weyl
group of $\Phi$.
\begin{te}
The Euler characteristic of $\rx$ is equal to $(-1)^n |W|$.
\end{te}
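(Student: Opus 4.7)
My plan is to reduce the statement to a counting fact about alcoves via the tools already developed in the paper. The idea is that the Euler characteristic computation is provided by the multiplicity Tutte polynomial, and then for a root system the relevant evaluation has a well-known interpretation as the order of the Weyl group.

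First, I would apply Corollary \ref{MEu}: the Euler characteristic of $\rx$ equals $(-1)^n M_X(1,0)$. So the statement is equivalent to $M_X(1,0)=|W|$. Next, by Corollary 5.4, $M_X(1,0)$ equals $R(X)$, the number of connected components of the complement $\overline{\rx}$ of the associated arrangement in the compact torus $\overline{T}=\mathfrak{h}_{\mathbb{R}}/\langle\Phi^\vee\rangle$. Thus it suffices to prove that $R(X)=|W|$ for a root system.

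To compute $R(X)$ in the root system case, I would pass to the universal cover $\mathfrak{h}_{\mathbb{R}}\to \overline{T}$. The preimages of the hypersurfaces $\overline{H_{e^\alpha}}$ are precisely the affine hyperplanes $\{x\in\mathfrak{h}_{\mathbb{R}} : \alpha(x)\in \mathbb{Z}\}$ for $\alpha\in\Phi^+$. The connected components of the complement of this affine arrangement are the \emph{alcoves}, and it is a classical fact (see for instance \cite{Hu}) that the affine Weyl group
\[
\widetilde{W}=W\ltimes \langle\Phi^\vee\rangle
\]
acts simply transitively on the set of alcoves. Since the regions of $\overline{\rx}$ are exactly the orbits of alcoves under the translation sublattice $\langle\Phi^\vee\rangle$, we obtain
\[
R(X)=\frac{|\widetilde{W}|}{|\langle\Phi^\vee\rangle|}=|W|.
\]
Combining with the first two steps yields $(-1)^n M_X(1,0)=(-1)^n|W|$, which is the desired Euler characteristic.

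The main potential obstacle is making sure the translation lattice acts freely on the set of alcoves so that the count of $\langle\Phi^\vee\rangle$-orbits is exactly $|W|$; this is immediate from the semidirect product decomposition and the simple transitivity of $\widetilde{W}$ on alcoves, but it must be invoked cleanly. Everything else is a straight assembly of Corollary \ref{MEu}, Corollary 5.4, and this classical fact about the affine Weyl group, so the proof should fit in just a few lines.
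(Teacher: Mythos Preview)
Your argument is correct and in fact supplies what the paper itself asks for. In the paper this theorem is not proved at all: it is simply quoted from \cite{Mo}, and then combined with Corollary~\ref{MEu} to deduce Corollary~\ref{wey}, after which the author remarks that ``it would be interesting to have a more direct proof of this fact.'' Your route runs in the opposite direction: you first establish $M_X(1,0)=|W|$ via the compact-torus region count and the classical alcove picture, and then invoke Corollary~\ref{MEu} to obtain the Euler characteristic. So this is a genuinely different and more self-contained argument; it trades the external citation to \cite{Mo} for the standard fact about simple transitivity of the affine Weyl group on alcoves, at the cost of importing the identity $R(X)=M_X(1,0)$ from Section~4.5 (your ``Corollary~5.4''---check the numbering), which in turn rests on the Ehrenborg--Readdy--Slone result.

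One point to clean up: the displayed equality
\[
R(X)=\frac{|\widetilde{W}|}{|\langle\Phi^\vee\rangle|}=|W|
\]
is a ratio of two infinite cardinalities and is formally meaningless. What you actually need is that the $\langle\Phi^\vee\rangle$-orbits of alcoves are in bijection with the cosets $\langle\Phi^\vee\rangle\backslash\widetilde{W}$, and the semidirect product decomposition $\widetilde{W}=W\ltimes\langle\Phi^\vee\rangle$ gives $[\widetilde{W}:\langle\Phi^\vee\rangle]=|W|$. Phrase it that way and the proof is complete.
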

By comparing this statement with Corollary \ref{MEu}, we get the
following
\begin{co}\label{wey}
$$M_X(1,0)= |W|.$$
\end{co}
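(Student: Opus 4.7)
The plan is to combine two results already established in the paper: the theorem just cited from \cite{Mo}, which asserts that the Euler characteristic of $\rx$ equals $(-1)^n |W|$, and Corollary \ref{MEu}, which asserts that $(-1)^n M_X(1,0)$ equals the Euler characteristic of $\rx$. Setting these two expressions for the Euler characteristic equal gives
\[
(-1)^n M_X(1,0) = (-1)^n |W|,
\]
and canceling the sign yields the claimed identity $M_X(1,0) = |W|$.

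The only thing to verify before concluding is that the hypotheses of Corollary \ref{MEu} genuinely apply in this setting: namely that $X$ is a finite list of characters of a torus $T$ generated by a lattice $\Lambda$ (so that the multiplicity Tutte polynomial $M_X(x,y)$ of Section 3.3 is defined and Corollary \ref{MEu} applies). This is guaranteed by the construction at the start of Section 6, where $\Lambda$ is the cocharacter lattice dual to $\langle \Phi^\vee \rangle$, $T = \mathfrak{h}/\langle \Phi^\vee \rangle$, and $X = \{e^\alpha : \alpha \in \Phi^+\}$ is a finite subset of $\Lambda$ spanning a sublattice of finite index (since $\Phi^+$ spans $\mathfrak{h}^*$).

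There is no real obstacle here: the corollary is a one-line consequence of combining an input from \cite{Mo} with a previously proved topological interpretation of $M_X(1,0)$. The only subtlety worth flagging for the reader would be the conventions: that the rank $n$ of the matroid $(X,I)$ indeed agrees with the dimension of $T$, so that the signs $(-1)^n$ on both sides refer to the same integer; this is immediate from $\dim T = \operatorname{rank}\Lambda = n$ together with the fact that $X$ spans $\Lambda \otimes \mathbb{R}$.
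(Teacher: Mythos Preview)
Your proposal is correct and follows exactly the paper's own argument: the corollary is obtained by equating the two expressions for the Euler characteristic of $\rx$ given by the cited theorem from \cite{Mo} and by Corollary~\ref{MEu}. Your additional checks on hypotheses and sign conventions are sound but not required beyond what the paper assumes.
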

It would be interesting to have a more direct proof of this
fact.

\begin{ex}
The toric arrangement described in Example \ref{exa} correspond to the root system of type $C_2$. Notice that the order of the Weyl group of type $C_2$ is
$$8=P(-1)=M_X(1,0)=R(X).$$
\end{ex}

~

~

% \bibitem{} \textsc{}, \textit{},

\end{document}